\numberwithin{equation}{section}
\theoremstyle{plain}
\newtheorem{theorem}{Theorem}[section] 
\newtheorem{lemma}{Lemma}[section]
\newtheorem{prop}{Proposition}[section]
\newtheorem{conjecture}{Conjecture}[section]
\theoremstyle{remark}
\newcommand{\E}[1]{\ensuremath\mathbb{E}\left[#1\right]}
\newcommand{\ind}[1]{\ensuremath\mathbb{I} \left\{#1\right\}}
\newcommand{\R}{\ensuremath \mathbb{R}}
\newcommand{\N}{\mathcal{N}}
\newcommand{\convp}{\ensuremath \stackrel{\mathrm{P}}{\longrightarrow}}
\newcommand{\convP}{\stackrel{\textrm{P}}{\longrightarrow}}
\newcommand{\convd}{\ensuremath \stackrel{\mathrm{d}}{\longrightarrow}}
\newcommand{\bb}{\boldsymbol{\beta}}
\newcommand{\bx}{\boldsymbol{x}}
\newcommand{\gmle}{g_{\text{MLE}}}
\newcommand{\mbf}[1]{\boldsymbol{#1}}
\newcommand{\prox}{\mathsf{prox}}
\newcommand{\zt}{T}
\DeclareMathOperator{\llr}{LLR} 
\DeclareMathOperator{\prob}{\mathbb{P}}
\DeclareMathOperator{\var}{Var}
\begin{document}

\captionsetup[figure]{labelfont=bf, labelformat=simple, labelsep=period}

\begin{frontmatter}
\title{The Asymptotic Distribution of the MLE in High-Dimensional
  Logistic Models: Arbitrary Covariance}
\runtitle{Logistic MLE: arbitrary covariance}

\begin{aug}
\author[A]{\fnms{Qian} \snm{Zhao}\ead[label=e1, mark]{qzhao1@stanford.edu}},
\author[B]{\fnms{Pragya} \snm{Sur}\ead[label=e2]{pragya@fas.harvard.edu}}
\and
\author[C]{\fnms{Emmanuel} \snm{J. Cand\`es}\ead[label=e3]{candes@stanford.edu}}

\address[A]{
Department of Statistics, 
Stanford University,
390 Jane Stanford Way,
Stanford, CA 94305-4020,
USA
\printead{e1}
}

\address[B]{
Department of Statistics,
Harvard University,
Science Center 712,
One Oxford Street,
Cambridge, MA 02138,
USA
\printead{e2}}

\address[C]{
Department of Mathematics and of Statistics, 
Stanford University,
Sequoia Hall, 
390 Jane Stanford Way,
Stanford, CA 94305-4020,
USA
\printead{e3}
}
\end{aug}

\begin{abstract}
 We study the distribution of the maximum likelihood estimate (MLE)
  in high-dimensional logistic models, where covariates are Gaussian with
  an arbitrary covariance structure. We prove that in the limit of
  large problems holding the ratio between the number $p$ of
  covariates and the sample size $n$ constant, every finite list of
  MLE coordinates follows a multivariate normal
  distribution. Concretely, the $j$th coordinate $\hat {\beta}_j$ of
  the MLE is asymptotically normally distributed with mean
  $\alpha_\star \beta_j$ and standard deviation $\sigma_\star/\tau_j$;
  here, $\beta_j$ is the value of the true regression coefficient, and
  $\tau_j$ the standard deviation of the $j$th predictor conditional
  on all the others. The numerical parameters $\alpha_\star > 1$ and
  $\sigma_\star$ only depend upon the problem dimensionality $p/n$ and
  the overall signal strength, and can be accurately estimated. Our
  results imply that the MLE's magnitude is biased upwards and that
  the MLE's standard deviation is greater than that predicted by
  classical theory. We present a series of experiments on simulated
  and real data showing excellent agreement with the theory.
\end{abstract}

\begin{keyword}
\kwd{High-dimensional inference}
\kwd{Logistic regression}
\kwd{Maximum likelihood estimation}
\kwd{Gaussian covariates}
\end{keyword}

\end{frontmatter}

\section{Introduction}
\label{sec:intro}

Logistic regression is the most widely applied statistical model for
fitting a binary response from a list of covariates. This model is
used in a great number of disciplines ranging from social science to
biomedical studies. For instance, logistic regression is routinely
used to understand the association between the susceptibility of a
disease and genetic and/or environmental risk factors.

A logistic model is usually constructed by the method of maximum
likelihood (ML) and it is therefore critically important to understand
the properties of ML estimators (MLE) in order to test hypotheses,
make predictions and understand their validity. In this regard,
assuming the logistic model holds, classical ML theory provides the
asymptotic distribution of the MLE when the number of observations $n$
tends to infinity while the {\em number $p$ of variables remains
  constant.} In a nutshell, the MLE is asymptotically normal with mean
equal to the true vector of regression coefficients and variance equal
to $\mathcal{I}^{-1}_{\mbf{\beta}}$, where $\mathcal{I}_{\mbf{\beta}} $
is the Fisher information evaluated at true coefficients
\cite[Appendix~A]{mccullagh1989}, \cite[Chapter~5]{vaart1998}. Another
staple of classical ML theory is that the extensively used likelihood
ratio test (LRT) asymptotically follows a chi-square distribution
under the null, a result known as Wilk's theorem
\cite{wilk1938}\cite[Chapter~16]{vaart1998}. Again, this holds in the
limit where $p$ is fixed and $n \rightarrow \infty$ so that the
dimensionality $p/n$ is vanishingly small. (See \cite{portnoy1984asymptotic,portnoy1985asymptotic,portnoy1988,he2000parameters,fan2019nonuniformity,anastasiou2020bounds} for the relevance of these classical results under diverging  dimensions with $p$ negligible compared to $n$.)

\subsection{High-dimensional maximum-likelihood theory}

Against this background, a recent paper \cite{sur18} showed that the
classical theory does not even approximately hold in large sample
sizes if $p$ is not negligible compared to $n$.  In more details,
empirical and theoretical analyses in \cite{sur18} establish the
following conclusions:
\begin{enumerate}
\item[1.] The MLE is biased in that it
overestimates the true effect
magnitudes. 
\item[2.] The variance of the MLE is larger than that implied
  by the inverse Fisher information.
\item[3.] The LRT is not distributed as a chi-square variable; it
  is stochastically larger than a chi-square.
\end{enumerate}
Under a suitable model for the covariates, \cite{sur18} developed
formulas to calculate the asymptotic bias and variance of the MLE
under a limit of large samples where the ratio $p/n$ between the
number of variables and the sample size has a positive limit
$\kappa$. Operationally, these results provide an excellent
approximation of the distribution of the MLE in large logistic models
in which the number of variables obey $p \approx \kappa \, n$ (this is
the same regime as that considered in random matrix theory when
researchers study the eigenvalue distribution of sample covariance
matrices in high dimensions).  Furthermore, \cite{sur18} also proved
that the LRT is asymptotically distributed as a fixed multiple of a
chi-square, with a multiplicative factor that can be determined.

\subsection{This paper}

The asymptotic distribution of the MLE in high-dimensional logistic
regression briefly reviewed above holds for models in which the
covariates are {\em independent} and Gaussian. This is the starting
point of this paper: since features typically encountered in
applications are not independent, it is important to describe the
behavior of the MLE under models with arbitrary covariance
structures. In this work, we shall limit ourselves to Gaussian
covariates although we believe our results extend to a wide class of
distributions with sufficiently light tails (we provide numerical evidence supporting this claim). 

To give a glimpse of our results, imagine we have $n$ independent
pairs of observations $(\boldsymbol{x}_i, y_i)$, where the features
$\bx_i \in \R^p$ and the class label $y_i \in \{-1,1\}$. We assume that
the $\bx_i$'s follow a multivariate normal distribution with mean zero
and arbitrary covariance, and that the likelihood of the class label
$y_i$ is related to $\bx_i$ through the logistic model 
\begin{equation}
  \label{eq:logistic}
  \prob(y_i=1\:|\:\boldsymbol{x}_i) =
  1/(1+e^{-\boldsymbol{x}_i^\top \boldsymbol{\beta}}).
\end{equation}
Denote the MLE for estimating the parameters $\mbf{\beta}$ by
$\hat{\mbf{\beta}}$ and consider centering and scaling
$\hat{\mbf{\beta}}$ via 
\begin{equation}
\label{eq:Z}
\zt_j = \frac{\sqrt{n}(\hat{\beta}_j-\alpha_\star\beta_j)}{\sigma_\star/\tau_j};
\end{equation}
here, $\tau_j$ is the standard deviation of the $j$th feature variable
(the $j$th component of $\bx_i$) conditional on all the other
variables (all the other components of $\bx_i$), whereas
$\alpha_\star > 1$ and $\sigma_\star$ are numerical parameters we
shall determine in Section \ref{sec:finitemle}. Then after
establishing a stochastic representation for the MLE which is valid
for every finite $n$ and $p$, this paper proves two distinct
asymptotic results (both hold in the same regime where $n$ and $p$
diverge to infinity in a fixed ratio).

The first concerns {\bf marginals of the MLE}. Under some conditions
on the magnitude of the regression coefficient $\beta_j$, we show
that\footnote{Throughout, $\convd$ (resp.~$\convP$) is a shorthand for
  convergence in distribution (resp.~probability).} 
 \begin{equation}\label{eq:Z_conv}
 \zt_j \,\, \convd \,\, \mathcal{N}(0,1),
\end{equation}
and demonstrate an analogous statement for the distribution of any
finite collection of coordinates. The meaning is clear; if the
statistician is given several data sets from the model above and
computes a given regression coefficient for each via ML, then the
histogram of these coefficients across all data sets will look
approximately Gaussian. 

This state of affairs extends \cite[Theorem~3]{sur18} significantly,
which established the joint distribution of a finite collection of
\emph{null} coordinates, in the setting of independent
covariates. Specifically,
\begin{enumerate}
\item  Eqn.~\eqref{eq:Z_conv} is true when covariates have arbitrary covariance structure.
\item Eqn.~\eqref{eq:Z_conv} holds for both null and non-null coordinates.
\end{enumerate}

The second asymptotic result concerns {\bf the empirical distribution
  of the MLE in a single data set/realization}: we prove that
the empirical distribution of the $\zt_j$'s
converges to a standard normal in the sense that, 
\begin{equation}\label{eq:intro-2}
  \frac{\# \{j: \zt_j \le t\}}{p} 
\convp \,\, \mathbb{P}(\N(0,1) \le t). 
\end{equation}

This means that if we were to plot the histogram of all the
$\zt_j$'s obtained from a single data set, we would just see a
bell curve. Another consequence is that for sufficiently nice
functions $f(\cdot)$, we have 
\begin{equation}\label{eq:weakconv}
\frac{1}{p} \sum_{j=1}^p f(\zt_j) \,\, \stackrel{\textrm{P}}{\longrightarrow}
\,\, \E{f(Z)}, 
\end{equation}
where $Z \sim \mathcal{N}(0,1)$. For instance, taking $f$ to be the absolute value---we use the
caveat that $f$ is not uniformly bounded---we would 
conclude that
\[
\frac{1}{p} \sum_{j=1}^p \sqrt{n} \, \tau_j |\hat{\beta}_j - \alpha_\star
\beta_j| \,\, \stackrel{\textrm{P}}{\longrightarrow} \,\,
\sigma_\star \, \sqrt{2/{\pi}}.
\]
Taking $f$ to be the indicator function of the interval $[-1.96,
1.96]$, we would see that 
\[
\frac{1}{p} \sum_{j=1}^p 1\left\{ -1.96 \le
\frac{\sqrt{n}(\hat{\beta}_j-\alpha_\star\beta_j)}{\sigma_\star/\tau_j}
\le 1.96\right\}  \,\, \stackrel{\textrm{P}}{\longrightarrow} \,\,
0.95.
\]
Hence, the miscoverage rate (averaged over all variables) of the
confidence intervals
\[
[\hat{\beta}_j^-, \hat{\beta}_j^+], \qquad \hat{\beta}_j^\pm = \frac{\hat{\beta}_j \pm 1.96 \sigma_\star/\sqrt{n}\tau_j}{\alpha_\star}, 
\]
in a single experiment would approximately be equal to 5\%.

Finally, this paper extends the LRT asymptotics to the case of
arbitrary covariance. 

We provide an R package ``glmhd'' available on GitHub \cite{glmhd},
which provides functionality to compute the parameters $\alpha_\star$,
$\sigma_\star$ discussed above and $\lambda_\star$ introduced below,
as well as functionality to analyze real datasets.  

\subsection{Technical contributions}
  
We derive the asymptotic distribution of every finite collection of
MLE coordinates (Theorem \ref{thm:finitemle}). In \cite{sur18}, the
authors studied the distribution of the MLE corresponding to a
\emph{null} variable, building upon leave-one-out techniques
\cite{elkaroui13,elkaroui18}, alternatively known as the cavity method
in statistical physics \cite{mezard1987spin}. The crucial issue here
is that it is not clear how to apply leave-one-out techniques when
analyzing non-null coordinates. Consequently, the novelty lies in
recognizing that studying the MLE when covariates are correlated
Gaussian is equivalent to studying the MLE when covariates are
i.i.d.~Gaussian and there is only a single non-null. This connection
is crucial for Theorem \ref{thm:finitemle}, which relies on the new 
\cite[Proposition~B.1]{zhao2021_supp} and Lemma \ref{lemma:exact}.  Theorem
\ref{thm:finitemle} also gives the distribution of linear combinations
for arbitrarily correlated Gaussian covariates---a setting beyond the
reach of the techniques from \cite{sur18}.  

The technique to establish the limiting empirical distribution of the
MLE is a distinct contribution. We show that for every $n$ and $p$,
the MLE can be represented as a function of a correlated Gaussian
vector that has empirical distribution converging to a standard
normal.  This representation \cite[Proposition~B.1]{zhao2021_supp} presented in the supplementary material is new. Prior work \cite{sur18} proved
a version of \eqref{eq:intro-2} utilizing the framework of generalized
approximate message passing
\cite{rangan2011generalized,javanmard2013state,barbier2019optimal}
(G-AMP). These proofs relied on the assumption of i.i.d.~entries on
the design matrix. For correlated matrices, other ideas are needed.

In a broader context, hypothesis testing and confidence interval
construction for high-dimensional regression models have been
extensively studied in the past decade, and even earlier
\cite{buhlmann2011statistics,wainwright2019high}. Here, we mention the
threads of research in the high-dimensional regime
\eqref{eq:asymptotics} that are most relevant for this paper,
deferring a detailed survey to
\cite{montanari2018mean,sur2019thesis}. In
\cite{elkaroui13,elkaroui18,donoho16,thrampoulidis2018precise,bellec2019second,celentano2020lasso},
the authors studied high-dimensional estimation and inference for
linear models using seemingly disparate technical
ingredients---leave-one-out
\cite{elkaroui13,elkaroui18,mezard1987spin}, approximate message
passing (AMP)\cite{donoho2009message,bayati2011lasso}, the Convex
Gaussian Min-Max Theorem (CGMT)
\cite{gordon1988milman,stojnic2013framework,thrampoulidis2015regularized,thrampoulidis2018precise},
second-order Poincare inequalities \cite{chatterjee2009fluctuations}
and debiasing \cite{vandegeer2014,zhang2014,montanari2014}. (See
\cite{jankova2018biased,tony2020semisupervised,ma2020global} for some
other works around debiasing that consider a different
high-dimensional regime.) Utilizing similar techniques,
\cite{sur18,sur19,barbier2019optimal,salehi19} studied
high-dimensional inference for generalized linear models with
i.i.d.~Gaussian designs.

\section{A stochastic representation of the MLE} \label{sec:stochastic}
We consider a setting with $n$ independent observations $(\mbf{x}_i, y_i)_{i=1}^n$ such that the covariates
$\boldsymbol{x}_i\in\R^{p}$ follow a multivariate normal
distribution
$\boldsymbol{x}_i\sim\N(\boldsymbol{0},\boldsymbol{\Sigma})$, 
with covariance $\mbf{\Sigma}\in\R^{p\times p}$, and the response
$y_i\in\{-1, 1\}$ follows the logistic model \eqref{eq:logistic} with
regression coefficients $\boldsymbol{\beta} \in \R^p$. We assume that $\mbf{\Sigma}$ has full column rank so that the model is identifiable. The maximum
likelihood estimator (MLE) $\hat{\bb}$ optimizes the log-likelihood function
\begin{equation}
\label{eq:likelihood}
\ell(\boldsymbol{b}; \boldsymbol{X},\mbf{y})=\sum_{i=1}^n -\log(1+\exp(-y_i \boldsymbol{x}_i^\top \boldsymbol{b}))
\end{equation}
over all $\boldsymbol{b} \in \R^p$. (Here and below, $ \boldsymbol{X}$
is the $n \times p$ matrix of covariates and $\mbf{y}$ the
$n \times 1$ vector of responses.)

\subsection{From dependent to independent covariates}\label{subsec:deptoindep}
We begin our discussion by arguing that the aforementioned setting of
dependent covariates can be translated to that of independent
covariates. This follows from the invariance  of the Gaussian
distribution with respect to linear
transformations.
\begin{prop}\label{prop:indtodep}
Fix any matrix $\mbf{L}$ obeying $\mbf{\Sigma}=\mbf{L}\mbf{L}^{\top}$,
and consider the vectors
\begin{equation}\label{eq:thetabetaeqn}
\hat{\mbf{\theta}}:=\mbf{L}^{\top}\hat{\mbf{\beta}} \qquad \mathrm{and} \qquad \mbf{\theta}:=\mbf{L}^{\top}\mbf{\beta}.
\end{equation}
Then $\hat{\mbf{\theta}}$ is the MLE in a logistic model with regression coefficient $\mbf{\theta}$ and covariates drawn i.i.d.~from $\N(\mbf{0},\mbf{I}_{p})$.
\end{prop}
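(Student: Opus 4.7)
The plan is to exploit the invariance of the Gaussian family under linear transformations: a $\mathcal{N}(\boldsymbol{0}, \boldsymbol{\Sigma})$ vector can be written as a linear image of an isotropic Gaussian, and the corresponding linear map on the parameter side is a bijective reparametrization of the logistic log-likelihood \eqref{eq:likelihood}. Since $\boldsymbol{\Sigma}$ has full rank, I may take $\mathbf{L}$ to be $p\times p$ and invertible (any factor satisfying $\mathbf{L}\mathbf{L}^\top = \boldsymbol{\Sigma}$ then has rank $p$; a wider $\mathbf{L}$ would only introduce a right-inverse with no substantive change to the argument).

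First I would define $\mathbf{z}_i := \mathbf{L}^{-1}\mathbf{x}_i$ and verify by a covariance computation that $\mathbf{z}_1,\ldots,\mathbf{z}_n$ are i.i.d.\ $\mathcal{N}(\boldsymbol{0}, \mathbf{I}_p)$. The linchpin is then the identity
\[
\mathbf{x}_i^\top \mathbf{b} \;=\; (\mathbf{L}\mathbf{z}_i)^\top \mathbf{b} \;=\; \mathbf{z}_i^\top (\mathbf{L}^\top \mathbf{b}),
\]
which has two uses. Setting $\mathbf{b} = \boldsymbol{\beta}$ shows that the conditional law of $y_i$ given $\mathbf{z}_i$ is the logistic model \eqref{eq:logistic} with coefficient vector $\mathbf{L}^\top \boldsymbol{\beta} = \boldsymbol{\theta}$, so $(\mathbf{z}_i, y_i)$ are i.i.d.\ samples from the claimed isotropic model. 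Leaving $\mathbf{b}$ general gives the functional identity $\ell(\mathbf{b}; \mathbf{X}, \mathbf{y}) = \ell(\mathbf{L}^\top \mathbf{b}; \mathbf{Z}, \mathbf{y})$ on all of $\mathbb{R}^p$, where $\mathbf{Z}$ is the matrix with rows $\mathbf{z}_i^\top$.

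Finally, because $\mathbf{b} \mapsto \mathbf{L}^\top \mathbf{b}$ is a bijection of $\mathbb{R}^p$, maximizers on the two sides correspond under this map. Thus $\hat{\boldsymbol{\theta}} = \mathbf{L}^\top \hat{\boldsymbol{\beta}}$ is precisely the MLE of the isotropic logistic model with true parameter $\boldsymbol{\theta}$, and existence/uniqueness properties of the two MLEs transfer along the same bijection with no extra work.

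There is no genuine obstacle here; the proposition is a linear-algebraic change-of-variables whose only substantive input is the injectivity of $\mathbf{L}^\top$ on $\mathbb{R}^p$, which is exactly where the full-rank assumption on $\boldsymbol{\Sigma}$ enters. The payoff is that all subsequent analysis can be carried out under the i.i.d.-Gaussian design, with results for general $\boldsymbol{\Sigma}$ recovered via $\hat{\boldsymbol{\beta}} = \mathbf{L}^{-\top}\hat{\boldsymbol{\theta}}$.
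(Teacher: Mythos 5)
Your proposal is correct and follows essentially the same route as the paper: the paper's proof rests on the identity $\ell(\mbf{b};\mbf{X},\mbf{y})=\ell(\mbf{L}^{\top}\mbf{b};\mbf{X}\mbf{L}^{-\top},\mbf{y})$, whose rows $\mbf{X}\mbf{L}^{-\top}$ are exactly your $\mbf{z}_i^{\top}=(\mbf{L}^{-1}\mbf{x}_i)^{\top}$, together with the bijectivity of $\mbf{b}\mapsto\mbf{L}^{\top}\mbf{b}$. Your write-up merely makes explicit two steps the paper leaves implicit (the verification that the $\mbf{z}_i$ are i.i.d.~standard Gaussian and that the conditional law of $y_i$ given $\mbf{z}_i$ is logistic with coefficient $\mbf{\theta}$), which is a sound elaboration rather than a different argument.
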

\begin{proof}
Because the likelihood \eqref{eq:likelihood} depends
on the $\mbf{x}_i$'s and $\mbf{b}$ only through their inner product, 
\begin{equation}\label{eq:transform_mle}
    \ell(\mbf{b}; \mbf{X},\mbf{y})
     =\ell(\mbf{L}^\top\mbf{b}; \mbf{X}\mbf{L}^{-\top}, \mbf{y})
\end{equation}
for every $\mbf{b} \in \R^p$. If $\hat{\mbf{\beta}}$ is the MLE of the original model, then $\hat{\mbf{\theta}} = \mbf{L}^{\top}\hat{\mbf{\beta}}$ is the MLE of a logistic model whose covariates are i.i.d.~draws from $\N(\mbf{0},\mbf{I}_p)$, and true regression coefficients given by $\mbf{\theta} = \mbf{L}^\top\mbf{\beta}$.  
\end{proof}

Proposition \ref{prop:indtodep} has a major consequence---for an
arbitrary variable $j$, which we can assume to be the last variable by permuting the order of the variables, we may choose
$\mbf{\Sigma}=\mbf{L}\mbf{L}^{\top}$ to be a Cholesky factorization of
the covariance matrix, such that $\mbf{x}_i$ can be expressed as
\begin{equation}\label{eq:Lchoice}
 \underbrace{\begin{bmatrix} \star \\ \star \\ \star \\ x_{i,j} \end{bmatrix}}_{\mbf{x}_i} = \underbrace{\begin{bmatrix} \star & & & \\ \star & \star & & \\ \star & \star & \star & \\ \star & \star & \star & \tau_j \end{bmatrix} }_{\mbf{L}} \underbrace{ \begin{bmatrix} \star \\ \star \\ \star \\ z_{i,j} \end{bmatrix}}_{\mbf{z}_i},
\end{equation}
where $\mbf{z}_i \sim \N(\mbf{0},\mbf{I}_p)$ and $\tau_j^2 =\mathrm{Var}(x_{i,j}|\mbf{x}_{i,-j})$. This can be seen from the triangular form: 
\[\mathrm{Var}(x_{i,j}|\mbf{x}_{i,-j}) = \mathrm{Var}(x_{i,j}|\mbf{z}_{i,-j})=  \mathrm{Var}(\tau_j z_{i,j}|\mbf{z}_{i,-j}) =\tau_j^2. \]
Then the equations in \eqref{eq:thetabetaeqn} tell us that 
\begin{align}\label{eq:singlecoordrel}
  \hat{\theta}_j  = \tau_j \hat{\beta}_j,\qquad \theta_j  =  \tau_j \beta_j, 
\end{align}
and, therefore, for any pair $(\alpha, \sigma)$,
\begin{equation}\label{eq:masterrelation}
\tau_j \frac{\hat{\beta}_j - \alpha \beta_j}{\sigma} = \frac{\hat{\theta}_j - \alpha
\theta_j}{\sigma}.
\end{equation}
In particular, if we can find $\alpha$ and $\sigma$ so that the RHS is
approximately $\N(0,1)$, then \eqref{eq:masterrelation} says that the
LHS is approximately $\N(0,1)$ as well. We will use the equivalence
\eqref{eq:thetabetaeqn} whenever possible.

\subsection{A stochastic representation of the MLE}
We work with $\mbf{\Sigma} = \mbf{I}_p$ in this section. The
rotational invariance of the Gaussian distribution in this case yields
an exact stochastic representation for the MLE $\hat{\mbf{\theta}}$,
which is valid for every choice of $n$ and $p$. This representation
will play a crucial role in supporting our subsequent results.

\begin{lemma}\label{lemma:exact}
Let $\hat{\mbf{\theta}}$ denote the MLE in a logistic model with regression vector $\mbf{\theta}$ and covariates drawn i.i.d.~from $\N(\mbf{0},\mbf{I}_p)$. Define the random variables 
 \begin{equation}\label{eq:alphansigmandef}
    \alpha(n) = \frac{\langle \hat{\mbf{\theta}}, \mbf{\theta} \rangle}{\|\mbf{\theta} \|^2} \qquad \mathrm{and} \qquad \sigma^2(n) = \|P_{\mbf{\theta}^\perp} \hat{\mbf{\theta}}\|^2, 
  \end{equation}
  where $P_{\mbf{\theta}^\perp}$ is the projection onto
  $\mbf{\theta}^\perp$, which is the orthogonal complement of
  $\mbf{\theta}$. Then
\[\frac{\hat {\mbf{\theta}} - \alpha(n) \mbf{\theta}}{\sigma(n)} \]
is uniformly distributed on the unit sphere lying in
$\mbf{\theta}^\perp$.
\end{lemma}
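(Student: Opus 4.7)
The plan is to exploit the rotational invariance of the i.i.d.\ standard Gaussian design, together with a symmetry argument built on the stabilizer subgroup of $\mbf{\theta}$. Let $O_{\mbf{\theta}} := \{U \in O(p) : U\mbf{\theta} = \mbf{\theta}\}$ be the subgroup of orthogonal matrices fixing $\mbf{\theta}$; restricted to $\mbf{\theta}^\perp$, this subgroup acts as the full orthogonal group of that $(p-1)$-dimensional subspace, which is the only structural fact about $O_{\mbf{\theta}}$ needed at the end.

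First I would show that the joint law of $(\mbf{X}, \mbf{y})$ is invariant under the map $\mbf{X} \mapsto \mbf{X} U^\top$ for any $U \in O_{\mbf{\theta}}$. Rotational invariance of $\N(\mbf{0}, \mbf{I}_p)$ implies that the rows $U\mbf{x}_i$ of $\mbf{X} U^\top$ are still i.i.d.\ standard Gaussian. Moreover, the linear predictor is preserved, since $(U\mbf{x}_i)^\top \mbf{\theta} = \mbf{x}_i^\top (U^\top \mbf{\theta}) = \mbf{x}_i^\top \mbf{\theta}$, so the conditional law of $\mbf{y}$ given the transformed design matches the conditional law given the original design. Next, I would invoke the equivariance identity $\ell(\mbf{b}; \mbf{X} U^\top, \mbf{y}) = \ell(U^\top \mbf{b}; \mbf{X}, \mbf{y})$---a direct specialization of \eqref{eq:transform_mle} taking $\mbf{L} = U$---to conclude that $\hat{\mbf{\theta}}(\mbf{X} U^\top, \mbf{y}) = U \hat{\mbf{\theta}}(\mbf{X}, \mbf{y})$. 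Combining these two observations yields the key distributional identity $U \hat{\mbf{\theta}} \stackrel{d}{=} \hat{\mbf{\theta}}$ for every $U \in O_{\mbf{\theta}}$.

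From here the conclusion follows cleanly. Since $U \mbf{\theta} = \mbf{\theta}$, both $\alpha(n)$ and $\sigma(n)$ are invariant under $\hat{\mbf{\theta}} \mapsto U \hat{\mbf{\theta}}$, while the residual transforms equivariantly: $U \hat{\mbf{\theta}} - \alpha(n) \mbf{\theta} = U(\hat{\mbf{\theta}} - \alpha(n) \mbf{\theta})$. Noting that $\alpha(n) \mbf{\theta}$ is precisely the orthogonal projection of $\hat{\mbf{\theta}}$ onto $\mathrm{span}(\mbf{\theta})$, the vector $V := (\hat{\mbf{\theta}} - \alpha(n) \mbf{\theta})/\sigma(n)$ is a unit vector in $\mbf{\theta}^\perp$ and satisfies $V \stackrel{d}{=} U V$ for all $U \in O_{\mbf{\theta}}$. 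Because $O_{\mbf{\theta}}$ restricted to $\mbf{\theta}^\perp$ is the full orthogonal group of that subspace, $V$ has an orthogonally invariant distribution on the unit sphere of $\mbf{\theta}^\perp$; uniqueness of Haar measure on the sphere then forces $V$ to be uniform.

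I do not expect a major analytic obstacle; this is essentially a symmetry argument. The only technical care concerns the event where the MLE fails to exist or is non-unique---a standard issue in high-dimensional logistic regression---but on the event where $\hat{\mbf{\theta}}$ is uniquely defined as a measurable function of $(\mbf{X}, \mbf{y})$, the equivariance identity is an equality between global maximizers and the symmetry argument goes through verbatim. A secondary point worth verifying is that the equivariance of $\hat{\mbf{\theta}}$ at the level of maximizers (not merely critical points) follows from concavity of $\ell$ in $\mbf{b}$, which is immediate.
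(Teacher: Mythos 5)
Your proposal is correct and follows essentially the same route as the paper: both identify $\hat{\mbf{\theta}} - \alpha(n)\mbf{\theta}$ as the projection $P_{\mbf{\theta}^\perp}\hat{\mbf{\theta}}$, establish $U\hat{\mbf{\theta}} \stackrel{d}{=} \hat{\mbf{\theta}}$ for every orthogonal $U$ fixing $\mbf{\theta}$ via rotational invariance of the Gaussian design and equivariance of the MLE, and conclude uniformity on the sphere in $\mbf{\theta}^\perp$ from invariance under the full orthogonal group of that subspace. Your added remarks on existence, uniqueness, and measurability of the maximizer are sensible refinements the paper leaves implicit.
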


\begin{proof}
Notice that 
\[
\hat{\mbf{\theta}} - \alpha(n)\mbf{\theta} = \hat{\mbf{\theta}} - \big\langle \hat{\mbf{\theta}}, \frac{\mbf{\theta} }{\|\mbf{\theta} \|} \big\rangle \frac{\mbf{\theta} }{\|\mbf{\theta} \|} = P_{\mbf{\theta}^\perp} \hat{\mbf{\theta}},
\]
the projection of $\hat{\mbf{\theta}}$ onto the orthogonal
complement of $\mbf{\theta}$. We therefore need to show that for any
orthogonal matrix $\mbf{U}\in\R^{p\times p}$ obeying $\mbf{U} \mbf{\theta} = \mbf{\theta}$, 
\begin{equation} \label{eq:toshow}
 \frac{ \mbf{U}P_{\mbf{\theta}^\perp} \hat{\mbf{\theta}}}{\|P_{\mbf{\theta}^\perp} \hat{\mbf{\theta}} \|} \stackrel{\mathrm{d}}{=} \frac{ P_{\mbf{\theta}^\perp} \hat{\mbf{\theta}}}{{\|P_{\mbf{\theta}^\perp} \hat{\mbf{\theta}} \|} }.
 \end{equation}
We know that
\begin{align}\label{eq:rotationrel}
\hat{\mbf{\theta}}  = P_{\mbf{\theta}} \hat{\mbf{\theta}} + P_{\mbf{\theta}^{\perp}} \hat{\mbf{\theta}}  \implies \mbf{U} \hat{\mbf{\theta}}  = \mbf{U}P_{\mbf{\theta}} \hat{\mbf{\theta}} + \mbf{U }P_{\mbf{\theta}^{\perp}} \hat{\mbf{\theta}}  =  P_{\mbf{\theta}}\hat{\mbf{\theta}} + \mbf{U }P_{\mbf{\theta}^{\perp}} \hat{\mbf{\theta}},
\end{align}
where the last equality follows from the definition of $\mbf{U}$. Now, $\mbf{U} \hat{\mbf{\theta}}$ is the MLE in a logistic model with covariates drawn i.i.d.~from $\N(\mbf{0}, \mbf{I}_{p})$ and regression vector $\mbf{U} \mbf{\theta} = \mbf{\theta}$. Hence, $\mbf{U} \hat{\mbf{\theta}} \stackrel{\mathrm{d}}{=} \hat{\mbf{\theta}}$ and \eqref{eq:rotationrel} leads to 
\[\mbf{U }\frac{P_{\mbf{\theta}^{\perp}} \hat{\mbf{\theta}}}{\|P_{\mbf{\theta}^\perp} \hat{\mbf{\theta}} \|} \stackrel{\mathrm{d}}{=} \frac{\hat{\mbf{\theta}} - P_{\mbf{\theta}}\hat{\mbf{\theta}}}{\|P_{\mbf{\theta}^\perp} \hat{\mbf{\theta}} \|}  = \frac{ P_{\mbf{\theta}^{\perp}}\hat{\mbf{\theta}} }{\|P_{\mbf{\theta}^\perp} \hat{\mbf{\theta}} \|}.\]
\end{proof}

\section{The asymptotic distribution of the MLE in high dimensions}\label{sec:main}

We now study the distribution of the MLE in the limit of
a large number of variables and observations. We consider a sequence of logistic regression problems with $n$ observations and $p(n)$ variables. In each problem instance, we have $n$ independent observations $(\mbf{x}_i, y_i)_{i=1}^n$ from a logistic model with covariates $\boldsymbol{x}_i\sim\N(\boldsymbol{0},\boldsymbol{\Sigma}(n)), \mbf{\Sigma}(n)\in\R^{p(n)\times p(n)}$, regression coefficients $\boldsymbol{\beta}(n)$, and response $y_i\in\{-1, 1\}$. As the sample size increases, we assume that the dimensionality $p(n)/n$ approaches a fixed limit in the sense that
\begin{equation}\label{eq:asymptotics}
p(n)/n \,\, \to \,\, \kappa \,\, > 0. 
\end{equation}

As in
\cite{sur18}, we consider a scaling of the regression coefficients
obeying
\begin{equation}\label{eq:gammaequation}
\var(\mbf{x}^\top_i\mbf{\beta}(n)) =\mbf{\beta}(n)^\top \mbf{\Sigma}(n)\mbf{\beta}(n) \,\, \to \,\, \gamma^2 < \infty. 
\end{equation}
This scaling keeps the ``signal-to-noise-ratio'' fixed. The larger
$\gamma$, the easier it becomes to classify the observations.
(If the parameter $\gamma$ were allowed to diverge to infinity, we
would have a noiseless problem in which we could correctly classify
essentially all the observations.)

In the remainder of this paper, we will drop $n$ from expressions such
as $p(n)$, $\mbf{\beta}(n)$, and $\mbf{\Sigma}(n)$ to simplify the
notation. We shall however remember that the number of variables $p$
grows in proportion to the sample size $n$.

\subsection{Existence of the MLE}\label{sec:exist-mle}

\begin{figure}
\begin{center}
          \includegraphics[width=6.5cm,keepaspectratio]{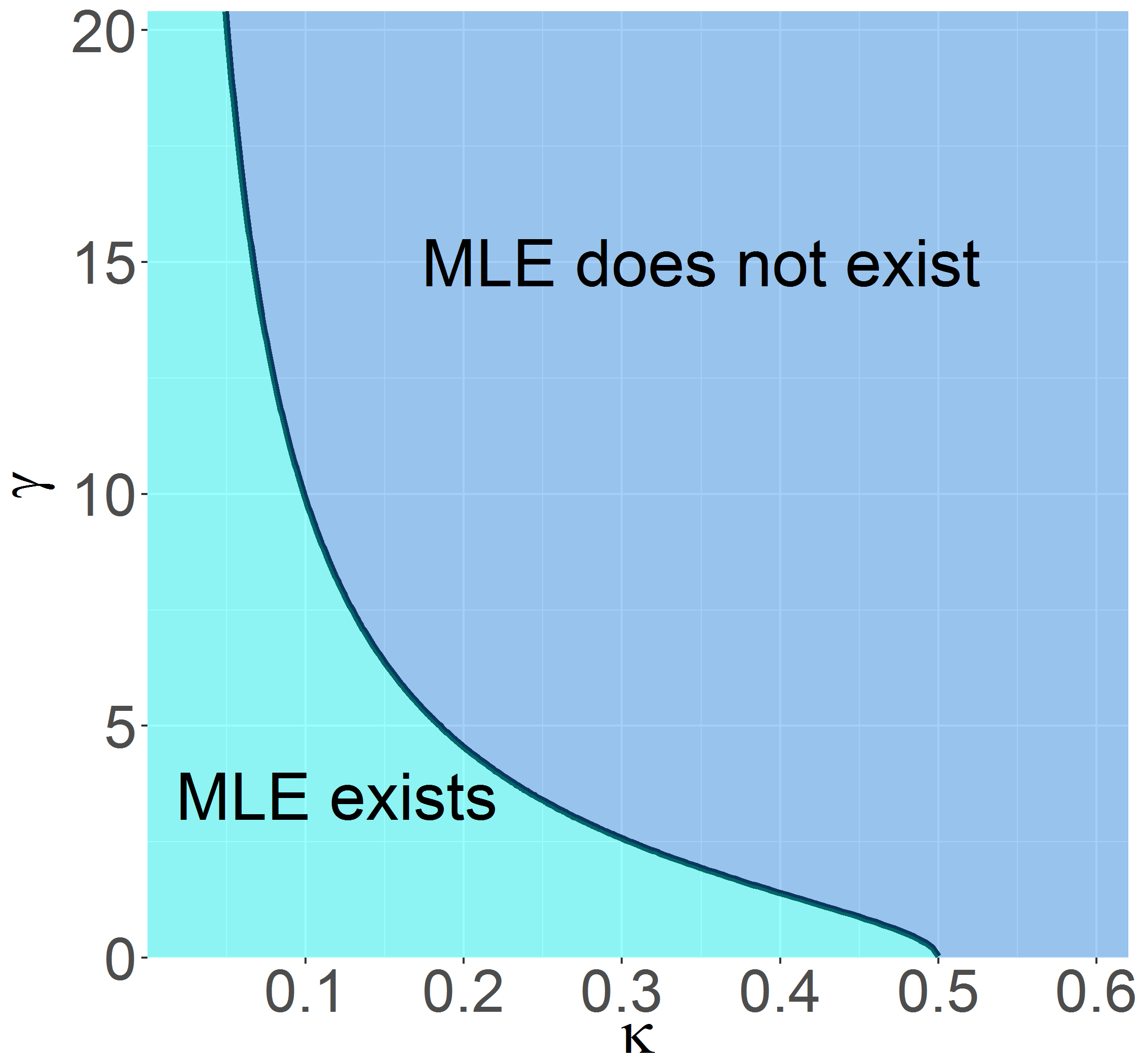}
\end{center}
\caption{Boundary
  curve $\kappa \mapsto \gmle(\kappa)$ separating the regions where
  the MLE asymptotically exists and where it does not \cite[Figure 1(a)]{candes18}.}
\label{fig:formula}
\end{figure}

An important issue in logistic
regression is that the MLE does not always exist. In fact, the MLE
exists if and only if the cases (the points $\bx_i$ for which
$y_i = 1$) and controls (those for which $y_i = -1$) cannot be
linearly separated; linear separation here means that there is a
hyperplane such that all the cases are on one side of the plane and
all the controls on the other.

When both $n$ and $p$ are large, whether such a separating hyperplane
exists depends only on the dimensionality $\kappa$ and the overall
signal strength $\gamma$. In the asymptotic setting described above,
\cite[Theorem 1]{candes18} demonstrated a phase transition phenomenon:
there is a curve $\gamma = \gmle(\kappa)$ in the $(\kappa, \gamma)$
plane that separates the region where the MLE exists from that where
it does not, see Figure \ref{fig:formula}. Formally,
\begin{align*}
    \gamma>g_{\mathrm{MLE}}(\kappa) \implies \lim_{n,p\to\infty} \prob(\mathrm{MLE \: exists})\to 0,\\
    \gamma<g_{\mathrm{MLE}}(\kappa) \implies \lim_{n,p\to\infty} \prob(\mathrm{MLE\:  exists})\to 1.
\end{align*}
It is noteworthy that the phase-transition diagram only depends on
whether $\gamma > \gmle(\kappa)$ and, therefore, does not depend on
the details of the covariance $\mbf{\Sigma}$ of the covariates. Since
we are interested in the distribution of the MLE, we shall consider
values of the dimensionality parameter $\kappa$ and signal strength $\gamma$ in
the light blue region from Figure \ref{fig:formula}.

\subsection{Finite-dimensional marginals of the MLE}\label{sec:finitemle}
We begin by establishing the asymptotic behavior of the random
variables $\alpha(n)$ and $\sigma(n)$, introduced in
\eqref{eq:alphansigmandef}. These limits will play a key role in the
distribution of the MLE.

\begin{lemma}\label{lemma:as_conv2}
Consider a sequence of logistic models with covariates drawn i.i.d.~from $\N(\mbf{0},\mbf{I}_p)$ and regression vectors $\mbf{\theta}$ satisfying $\lim_{n\to\infty} \|\mbf{\theta}\|^2\, \to \,\gamma^2$. Let $\hat{\mbf{\theta}}$ be the MLE and define $\alpha(n)$ and $ \sigma(n)$ as in \eqref{eq:alphansigmandef}. Then, if $(\kappa,\gamma)$ lies in the region where the MLE exists asymptotically, we have that
\begin{equation}\label{eq:aslimits}
    \alpha(n) \,\, \stackrel{\mathrm{a.s.}}{\longrightarrow} \,\,
    \alpha_\star \qquad  \mathrm{and} \qquad \sigma(n)^2 \,\, 
    \stackrel{\mathrm{a.s.}}{\longrightarrow} \,\, \kappa \sigma_\star^2,
\end{equation}
where $\alpha_{\star}$ and $\sigma_\star$ are numerical constants that
only depend on $\kappa$ and $\gamma$. 
\end{lemma}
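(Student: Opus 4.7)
My plan is to deduce this lemma from the AMP state-evolution analysis of the independent-covariate MLE carried out in \cite{sur18}. Both $\alpha(n)$ and $\sigma(n)^2$ are smooth functionals of the pair $(\hat{\mbf{\theta}},\mbf{\theta})$, and the paper \cite{sur18} derives precisely this type of almost-sure limit for such functionals under i.i.d.\ isotropic Gaussian covariates. Moreover, by the same rotation-invariance argument used in the proof of Lemma \ref{lemma:exact}, the joint law of $(\hat{\mbf{\theta}},\mbf{\theta})$ depends on $\mbf{\theta}$ only through $\|\mbf{\theta}\|$, so we may reduce to an essentially one-parameter family of problems indexed by $\|\mbf{\theta}\|\to\gamma$.

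The key imported ingredient from \cite{sur18} is the following: on the admissibility region $\gamma<\gmle(\kappa)$, there exist deterministic numbers $\alpha_\star,\sigma_\star$ depending only on $(\kappa,\gamma)$ such that, after translating \cite{sur18}'s normalization to the present one,
\begin{equation*}
\frac{\langle\hat{\mbf{\theta}},\mbf{\theta}\rangle}{\|\mbf{\theta}\|^2}\convas\alpha_\star,\qquad \frac{1}{\kappa}\,\bigl\|P_{\mbf{\theta}^\perp}\hat{\mbf{\theta}}\bigr\|^2\convas\sigma_\star^2.
\end{equation*}
The first line is directly the claimed limit $\alpha(n)\to\alpha_\star$, and multiplying the second by $\kappa$ yields $\sigma(n)^2\to\kappa\sigma_\star^2$.

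The main obstacle, and where I would spend the bulk of the careful work, is the bookkeeping that converts \cite{sur18}'s scaling convention into ours. In \cite{sur18} the covariates are effectively taken to satisfy $\mbf{x}_i\sim\N(\mbf{0},\mbf{I}_p/n)$ with $\|\mbf{\beta}\|^2/p$ held fixed, so that $\|\hat{\mbf{\beta}}-\alpha_\star\mbf{\beta}\|^2/p$ is the natural normalized variance; here we instead take $\mbf{x}_i\sim\N(\mbf{0},\mbf{I}_p)$ and hold $\|\mbf{\theta}\|^2\to\gamma^2$ fixed. The two setups are linked by the involution $\mbf{x}\leftrightarrow\sqrt n\,\mbf{x}$, $\mbf{\theta}\leftrightarrow\mbf{\theta}/\sqrt n$, which leaves the log-likelihood \eqref{eq:likelihood} invariant and thus carries one MLE onto the other. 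Under this map, $\|P_{\mbf{\theta}^\perp}\hat{\mbf{\theta}}\|^2$ picks up a factor of $1/n$, while the normalizer $p$ becomes $p/n\to\kappa$, which produces exactly the extra factor of $\kappa$ appearing in the limit for $\sigma(n)^2$.

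A secondary technical point, should it arise, is ensuring almost-sure rather than merely in-probability convergence. For this I would rely on the quadratic concentration of Gaussian inner products together with the bounded curvature of the logistic loss, which makes $\hat{\mbf{\theta}}$ Lipschitz in the Gaussian design; this yields sub-exponential concentration of $\alpha(n)$ and $\sigma(n)^2$ around their means and a Borel--Cantelli argument along a sufficiently fast subsequence then upgrades in-probability to almost-sure convergence. Apart from this translation and the concentration upgrade, the lemma is a direct consequence of results already available in \cite{sur18}.
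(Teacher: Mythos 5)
Your high-level plan (reduce by rotation invariance and import the limits from \cite{sur18}) is the same as the paper's, but as written there is a genuine gap: the ``key imported ingredient'' you state is essentially the conclusion of the lemma itself, and you never verify the hypotheses under which \cite{sur18} delivers it. What \cite[Theorem~2]{sur18} actually provides is an almost-sure limit for empirical averages $\frac{1}{p}\sum_j\psi\bigl(\sqrt{n}(\hat\eta_j-\alpha_\star\eta_j),\sqrt{n}\eta_j\bigr)$ over pseudo-Lipschitz $\psi$, \emph{under the conditions} that the empirical distribution of the rescaled coefficients $\sqrt{n}\eta_j$ converges weakly to some $\Pi$ and that its second moment converges to that of $\Pi$. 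A generic $\mbf{\theta}$ with $\|\mbf{\theta}\|^2\to\gamma^2$ violates these conditions: for the one-sparse vector $\mbf{\theta}=(\gamma,0,\dots,0)$ the empirical distribution of $\sqrt{n}\theta_j$ tends to $\delta_0$ while $\frac{1}{p}\sum_j n\theta_j^2\to\gamma^2/\kappa\neq 0$, so the second-moment condition fails and the theorem cannot be applied to $\mbf{\theta}$ directly. The missing idea is to rotate to the \emph{specific} representative $\mbf{U}\mbf{\theta}=\frac{1}{\sqrt p}(\|\mbf{\theta}\|,\dots,\|\mbf{\theta}\|)$, for which the empirical distribution is $\delta_{\gamma/\sqrt{\kappa}}$ and both hypotheses hold, then apply the theorem with the test functions $\psi(t,u)=tu$ and $\psi(t,u)=t^2$ and transfer back via orthogonal invariance of $\langle\cdot,\cdot\rangle$ and $\|\cdot\|$. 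Your reduction ``to a one-parameter family indexed by $\|\mbf{\theta}\|$'' gestures at this but never selects a representative satisfying the hypotheses, which is exactly where the proof could fail.

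Two smaller points. First, your normalization discussion is fine in spirit, but the factor $\kappa$ in $\sigma(n)^2\to\kappa\sigma_\star^2$ comes out automatically once the empirical-average form $\frac{n}{p}\|\hat{\mbf{\theta}}-\alpha_\star\mbf{\theta}\|^2\to\sigma_\star^2$ is used; no separate involution argument is needed. Second, your proposed upgrade from in-probability to almost-sure convergence via ``Lipschitzness of $\hat{\mbf{\theta}}$ in the Gaussian design'' is both unnecessary (the cited theorem already gives almost-sure convergence) and unsubstantiated: the logistic MLE is not a Lipschitz function of the design near the separability threshold, so the concentration-plus-Borel--Cantelli route would not go through as stated.
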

We defer the proof to the supplementary material \cite[Section~A]{zhao2021_supp}; here, we explain
where the parameters $(\alpha_\star, \sigma_\star)$ come from. Along
with an additional parameter $\lambda_\star$, the triple
$(\alpha_\star, \sigma_\star,\lambda_\star)$ is the unique solution to
the system of equations parameterized by $(\kappa, \gamma)$ in three
variables $(\alpha, \sigma, \lambda)$ given by
\begin{equation}\label{eq:system_equation}
    \begin{dcases}
\sigma^2 & =\frac{1}{\kappa^2}\E{2\rho'(Q_1)(\lambda\rho'(\prox_{\lambda\rho}(Q_2)))^2}\\
0 & =\E{\rho'(Q_1)Q_1\lambda\rho'(\prox_{\lambda\rho}(Q_2))}\\
1-\kappa & = \E{\frac{2\rho'(Q_1)}{1+\lambda\rho''(\prox_{\lambda\rho}(Q_2))}},
\end{dcases}
\end{equation}
where $(Q_1,Q_2)$ is a bivariate normal variable with mean $\mbf{0}$ and covariance
\[
\mbf{\Sigma}(\alpha,\sigma)=\left[\begin{matrix}
\gamma^2 & -\alpha\gamma^2\\
-\alpha\gamma^2 & \alpha^2\gamma^2+\kappa\sigma^2
\end{matrix}\right]. 
\]
Above, the proximal operator is defined as
\[
\prox_{\lambda\rho}(z)=\arg\min_{t\in\R}\left\{\lambda\rho(t)+\frac{1}{2}(t-z)^2\right\},
\]
where $\rho(t)=\log(1+e^t)$.
This system of equations can be rigorously derived from the
generalized approximate message passing algorithm
\cite{rangan2011generalized,javanmard2013state}, or by analyzing the
auxiliary optimization problem \cite{salehi19}. They can also be
heuristically understood with an argument similar to that in
\cite{elkaroui13}; we defer to \cite{sur18} for a complete
discussion. The important point here is that in the region where the
MLE exists, the system \eqref{eq:system_equation} has a unique
solution.  Lemma \ref{lemma:as_conv2} contributes novel insights by
interpreting $\alpha_\star, \sigma_\star$ through the lens of the
finite sample quantities $\alpha(n), \sigma(n)$, which proves crucial
for establishing Theorem 3.1.

We are now in a position to describe the asymptotic behavior of the
MLE. The proof is deferred to  the supplementary material \cite[Section~A]{zhao2021_supp}.
\begin{theorem}\label{thm:finitemle}
  Consider a logistic model with covariates $\mbf{x}_i$ drawn
  i.i.d.~from $\mathcal{N}(\mbf{0},\mbf{\Sigma}(n))$ and assume we are
  in the $(\kappa,\gamma)$ region where the MLE exists asymptotically.
  Then for every coordinate whose regression coefficient satisfies
   $\sqrt{n} \tau_j\beta_j = O(1)$, 
    \begin{equation}\label{eq:nonnullsingle}
   \frac{\sqrt{n}(\hat \beta_j - \alpha_\star
  \beta_j)}{\sigma_\star/\tau_j } \,  \stackrel{\mathrm{d}}{\longrightarrow} \, \N(0,1). 
\end{equation}
Above $\tau^2_j = \var(x_{i,j}\,|\,\mbf{x}_{i,-j})$ is the conditional
variance of $x_{i,j}$ given all the other covariates. More generally,
for any sequence of deterministic unit normed vectors $\mbf{v}(n)$
with $\sqrt{n} \tau(\mbf{v}) \mbf{v}(n)^{\top}\mbf{\beta}(n) = O(1)$,
we have that
  \begin{equation}\label{eq:lincomb}
    \frac{\sqrt{n}\mbf{v}^\top(\hat{\mbf{\beta}} - \alpha_\star
      \mbf{\beta})}{\sigma_\star/\tau(\mbf{v})} \, \stackrel{\mathrm{d}}{\longrightarrow} \, \N(0,1).
\end{equation}
Here $\tau(\mbf{v})$ is given by
\[\tau^2(\mbf{v}) = \mathrm{Var}(\mbf{v}^{\top} \mbf{x}_i | P_{\mbf{v}^{\perp}} \mbf{x}_i )= \left(\mbf{v}^{\top} \mbf{\Theta}(n) \mbf{v} \right)^{-1 }, \]
where $\mbf{\Theta}(n)$ equals the precision matrix
$\mbf{\Sigma}(n)^{-1}$.  A consequence is this: consider a finite set
of coordinates $\mathcal{S} \subset \{1, \ldots, p\}$ obeying
$\sqrt{n}({\boldsymbol{\beta}_{\mathcal{S}}^\top \boldsymbol{\Theta}_{\mathcal{S}}^{-1} \mbf{\beta}_{\mathcal{S}}})^{\frac{1}{2}} = O(1)$.
Then
\begin{equation}\label{eq:nonnull}
  \frac{\sqrt{n}\mbf{\Theta}_{\mathcal{S}}^{-1/2}(\hat{\mbf{\beta}}_{\mathcal{S}} -
  \alpha_{\star} \mbf{\beta}_{\mathcal{S}})}{\sigma_\star}
\, \, \stackrel{\mathrm{d}}{\longrightarrow}\,\,  \N(\mbf{0},\mbf{I}_{|\mathcal{S}|}).
\end{equation}
Above $ \mbf{\beta}_{\mathcal{S}}$ is the slice of $\mbf{\beta}$ with
entries in $\mathcal{S}$ and, similarly,
$\mbf{\Theta}_{\mathcal{S}}$ is the slice of the precision
matrix $\mbf{\Theta}$ with rows and columns in $\mathcal{S}$. 
\end{theorem}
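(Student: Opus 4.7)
The plan is to pass to the i.i.d.~covariate model via Proposition \ref{prop:indtodep}, exploit the exact stochastic representation from Lemma \ref{lemma:exact}, and combine it with the almost-sure limits $\alpha(n)\convas\alpha_\star$, $\sigma(n)^2\convas\kappa\sigma_\star^2$ from Lemma \ref{lemma:as_conv2} via Slutsky's theorem. The coordinatewise claim \eqref{eq:nonnullsingle} is the special case $\mbf{v}=\mbf{e}_j$ of the general linear-combination claim \eqref{eq:lincomb}, using the Cholesky choice \eqref{eq:Lchoice} together with the identity \eqref{eq:masterrelation}; I would therefore establish \eqref{eq:lincomb} first and then deduce \eqref{eq:nonnull} by Cram\'er--Wold.

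For a unit vector $\mbf{v}$, pick any $\mbf{L}$ with $\mbf{L}\mbf{L}^\top=\mbf{\Sigma}$ and set $\mbf{u}=\mbf{L}^{-1}\mbf{v}$. Then $\mbf{v}^\top\hat{\mbf{\beta}}=\mbf{u}^\top\hat{\mbf{\theta}}$, $\mbf{u}^\top\mbf{\theta}=\mbf{v}^\top\mbf{\beta}$, and $\|\mbf{u}\|^2=\mbf{v}^\top\mbf{\Theta}\mbf{v}=1/\tau^2(\mbf{v})$. Substituting $\hat{\mbf{\theta}}=\alpha(n)\mbf{\theta}+\sigma(n)\mbf{U}$ from Lemma \ref{lemma:exact}, where $\mbf{U}$ is uniform on the unit sphere of $\mbf{\theta}^\perp$, gives
\[
\sqrt{n}\,\mbf{v}^\top(\hat{\mbf{\beta}}-\alpha_\star\mbf{\beta}) \;=\; (\alpha(n)-\alpha_\star)\sqrt{n}\,\mbf{v}^\top\mbf{\beta} \;+\; \sigma(n)\sqrt{n}\,\mbf{u}^\top\mbf{U}.
\]
After multiplication by $\tau(\mbf{v})/\sigma_\star$, the bias term equals $(\alpha(n)-\alpha_\star)\cdot O(1)\convp 0$ by the hypothesis $\sqrt{n}\tau(\mbf{v})\mbf{v}^\top\mbf{\beta}=O(1)$ and Lemma \ref{lemma:as_conv2}. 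For the random term, use the coupling $\mbf{U}\stackrel{d}{=}\mbf{g}/\|\mbf{g}\|$ with $\mbf{g}\sim\N(\mbf{0},\mbf{I})$ supported on $\mbf{\theta}^\perp$: one has $\mbf{u}^\top\mbf{U}=(P_{\mbf{\theta}^\perp}\mbf{u})^\top\mbf{g}/\|\mbf{g}\|$, the numerator is exactly $\N(0,\|P_{\mbf{\theta}^\perp}\mbf{u}\|^2)$, $\|\mbf{g}\|^2\sim\chi^2_{p-1}$ so $\|\mbf{g}\|/\sqrt{p-1}\convas 1$, and $\tau^2(\mbf{v})\|P_{\mbf{\theta}^\perp}\mbf{u}\|^2=1-\tau^2(\mbf{v})(\mbf{v}^\top\mbf{\beta})^2/\|\mbf{\theta}\|^2\to 1$ using $\|\mbf{\theta}\|^2\to\gamma^2$ and the hypothesis. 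Slutsky's theorem together with $\sigma(n)\convas\sqrt{\kappa}\sigma_\star$ then yields $\sqrt{n}\tau(\mbf{v})\sigma(n)\mbf{u}^\top\mbf{U}/\sigma_\star\convd\N(0,1)$, which combined with the vanishing bias establishes \eqref{eq:lincomb}.

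For the joint statement \eqref{eq:nonnull}, by Cram\'er--Wold it suffices to show $\mbf{w}^\top\sqrt{n}\mbf{\Theta}_{\mathcal{S}}^{-1/2}(\hat{\mbf{\beta}}_{\mathcal{S}}-\alpha_\star\mbf{\beta}_{\mathcal{S}})/\sigma_\star\convd\N(0,1)$ for every unit $\mbf{w}\in\R^{|\mathcal{S}|}$. Set $\mbf{a}=\mbf{\Theta}_{\mathcal{S}}^{-1/2}\mbf{w}$, pad to $\mbf{v}'\in\R^p$ with zeros outside $\mathcal{S}$, and normalize to $\mbf{v}=\mbf{v}'/\|\mbf{v}'\|$. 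A brief calculation gives $\|\mbf{v}'\|^2\,\mbf{v}^\top\mbf{\Theta}\mbf{v}=\mbf{a}^\top\mbf{\Theta}_{\mathcal{S}}\mbf{a}=1$ and $\tau(\mbf{v})\,\mbf{v}^\top\mbf{\beta}=\mbf{w}^\top\mbf{\Theta}_{\mathcal{S}}^{-1/2}\mbf{\beta}_{\mathcal{S}}$, whose magnitude Cauchy--Schwarz bounds by $(\mbf{\beta}_{\mathcal{S}}^\top\mbf{\Theta}_{\mathcal{S}}^{-1}\mbf{\beta}_{\mathcal{S}})^{1/2}$; hence the hypothesis of the joint statement implies that of \eqref{eq:lincomb} for this $\mbf{v}$. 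Applying \eqref{eq:lincomb} and re-collecting constants yields the claimed one-dimensional limit with unit variance.

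The main obstacle I anticipate is the bookkeeping of the triangular-array dependence on $n$ in the Slutsky step: neither $\tau(\mbf{v}(n))$ nor $\|P_{\mbf{\theta}^\perp}\mbf{u}(n)\|$ need converge individually, yet their product must tend to $1$. This is exactly where the assumption $\sqrt{n}\tau(\mbf{v})\mbf{v}^\top\mbf{\beta}=O(1)$ is consumed, both to kill the bias contribution and to push $\tau(\mbf{v})\|P_{\mbf{\theta}^\perp}\mbf{u}\|\to 1$. Everything else is either an a.s.~limit imported from Lemma \ref{lemma:as_conv2} or a standard Gaussian-sphere CLT.
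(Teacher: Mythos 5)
Your proposal is correct and follows essentially the same route as the paper: reduce to i.i.d.\ covariates, split $\sqrt{n}\,\mbf{u}^\top(\hat{\mbf{\theta}}-\alpha_\star\mbf{\theta})$ into a bias term killed by $\alpha(n)\convas\alpha_\star$ together with the $O(1)$ hypothesis and a spherical term handled by Lemma \ref{lemma:exact}, the Gaussian representation of the uniform sphere, $\sigma(n)^2\convas\kappa\sigma_\star^2$, and Slutsky. The only organizational differences are that you prove \eqref{eq:lincomb} directly for a general unit vector via $\mbf{u}=\mbf{L}^{-1}\mbf{v}$ (the paper proves the single coordinate first and then rotates the covariate model), and that you make explicit the Cram\'er--Wold step for \eqref{eq:nonnull}, which the paper leaves implicit.
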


Returning to the Introduction, we now see that the behavior of the MLE
is different from that implied by the classical textbook result, which
states that
\[
 \sqrt{n} (\hat{\mbf{\beta}} - \mbf{\beta}) \, \,
 \stackrel{\mathrm{d}}{\longrightarrow}\,\, \N(0,
 \mathcal{I}^{-1}_\beta). 
 \]
 We also see that Theorem \ref{thm:finitemle} extends
 \cite[Theorem~3]{sur18} in multiple directions. Indeed, this prior
 work assumed standardized and independent covariates
 (i.e.~$\mbf{\Sigma} = \mbf{I}$)---implying that $\tau_j^2 = 1$---and
 established $\sqrt{n}\hat{\beta}_j/\sigma_\star \convd \N(0,1)$ only
 in the special case $\beta_j=0$. 
 
\subsubsection{Finite sample accuracy} 
We study the finite sample accuracy of Theorem \ref{thm:finitemle}
through numerical examples.
We consider an experiment with a fixed number of observations set to
$n=4,000$
and a number of variables set to $p=800$
so that $\kappa
= 0.2$. We set the signal strength to
$\gamma^2=5$.
(For this problem size, the asymptotic result for null variables has
been observed to be very accurate when the covariates are independent
\cite{sur18}.) 

We sample the covariates such that the covariance matrix is the correlation matrix from an AR(1) model with parameter $\rho=0.5$, i.e. $\Sigma_{ij} = \rho^{|i-j|}$.
We then randomly sample half of the coefficients to be non-nulls, with
equal and positive magnitudes, chosen to attain the desired signal
strength $\mbf{\beta}^{\top}\mbf{\Sigma}\mbf{\beta}
= 5$. For a given non-null coordinate
$\beta_j$,
we calculate the centered and scaled MLE $\zt_j$
\eqref{eq:Z}, and repeat the experiment $B=100,000$
times. Figure \ref{fig:qqplot} shows a qqplot of the empirical
distribution of $ \zt_j$
versus the standard normal distribution. Observe that the quantiles
align perfectly, demonstrating the accuracy of
\eqref{eq:nonnullsingle}.

We further examine the empirical accuracy of \eqref{eq:nonnullsingle}
through the lens of confidence intervals and finite sample
coverage. Theorem \ref{thm:finitemle} suggests that if
$z_{(1-\alpha/2)}$ is the $(1-\alpha/2)$th quantile of a standard
normal variable, $\beta_j$ should lie within the interval
\begin{equation}\label{eq:cisingle}
    \left[\frac{1}{\alpha_\star}\left(\hat{\beta}_j-\frac{\sigma_\star}{\sqrt{n}\tau_j}z_{(1-\alpha/2)}\right),\frac{1}{\alpha_\star}\left(\hat{\beta}_j+\frac{\sigma_\star}{\sqrt{n}\tau_j}z_{(1-\alpha/2)}\right)\right]
\end{equation}
about $(1-\alpha)B$ times. Table \ref{tab:cov_prop_nonnull} shows the proportion of experiments in which $\beta_j$ is covered by \eqref{eq:cisingle} for different choices of the confidence level $(1-\alpha)$, along with the respective standard errors. For every confidence level,  the empirical coverage proportion lies extremely close to the corresponding target.

\begin{figure}
\begin{center}
          \includegraphics[width=6.5cm,keepaspectratio]{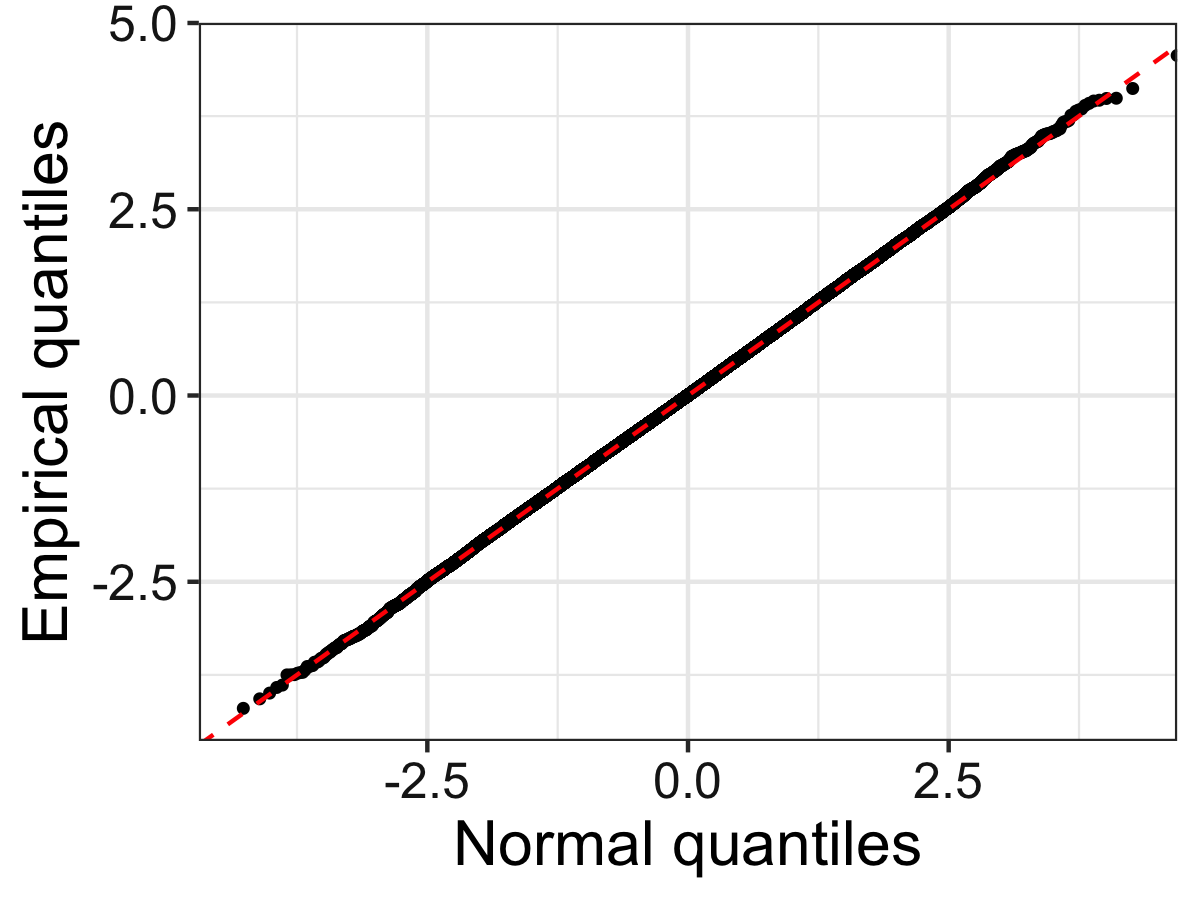}
\end{center}
\caption{Quantiles of the empirical distribution of an appropriately centered and scaled MLE coordinate versus standard normal quantiles. }
\label{fig:qqplot}
\end{figure}

\begin{table*}
\caption{Coverage proportion of a single variable. Each cell reports the proportion of times $\beta_j$ falls within \eqref{eq:cisingle}, calculated over $B=100,000$ repetitions; the standard errors are provided as well. }
\label{tab:cov_prop_nonnull}
\begin{tabular}{@{}cccccc@{}}
 \hline
     Nominal coverage & & & & & \\
$100(1-\alpha)$ & 99 & 98 & 95 & 90 & 80 \\
        \hline
        Empirical coverage & 98.97 & 97.96 & 94.99 & 89.88 & 79.88 \\
        Standard error & 0.03 & 0.04 & 0.07 & 0.10 & 0.13 \\
      \hline
\end{tabular}
\end{table*}

\subsubsection{Condition on the regression coefficients}

In this section, we conduct simulations to investigate to what extent
the condition on the magnitude of $\beta_j$ in Theorem
\ref{thm:finitemle} may be relaxed. We vary the magnitude of the
non-null coefficients $\beta_j$ by choosing $\beta_j$ such that
$\tau_j \beta_j/\gamma \in\{ 0.15, 0.3, 0.5\}$ (so that
$\sqrt{n}\tau_j\beta_j$ is large). We report the variance of a single
MLE coordinate in $B = 10,000$ repeated experiments (Figure
\ref{fig:newfig}). The observed biases range
  from 1.450 to 1.461 in the simulations and do not show a pattern. In this example, we set
$\kappa = 0.2$, $\gamma = 2$ and use the same covariance matrix as in
the last paragraph. The theoretical standard deviation of
$\hat{\beta}_j$ is thus 5.65 (dashed line). We observe that our theory
works well when $\tau_j\beta_j/\gamma \leq 0.15$, because the
empirical standard error is close to the theoretical prediction and
the MLE is approximately Gaussian (Figure
\ref{fig:qqplot_large_nonnull}). However, the standard error of the
MLE increases as $\beta_j$ increases. At $\tau_j\beta_j/\gamma = 0.5$,
for instance, the observed standard error when $n=4000$ is 7.71, which
is 36\% percent larger than the theoretical standard deviation. We
also observe that for a fixed $\gamma$ and $\kappa$, the standard
error slightly decreases as $n$ increases, suggesting that the theory
becomes more accurate at larger $n$ if
$\tau_j\beta_j/\gamma \lesssim 0.15$.

Now that we have reasons to believe the theory holds for
$\tau_j \beta_j$ that is not vanishing, we study a possible path to a sharper statement. Theorem \ref{thm:finitemle} requires  the condition
$\sqrt{n}\tau_j\beta_j = O(1)$ to ensure that
$\sqrt{n}(\alpha(n) - \alpha_\star) \tau_j \beta_j = o(1)$. (See the
proof in  \cite[Section~A]{zhao2021_supp}.) We thus study the behavior of the random variable
$\sqrt{n}(\alpha(n) - \alpha_\star)$ and compute its standard
deviation. We use the same covariance matrix as before and set half of
the variables to be non-nulls. We compute $\alpha(n)$ as
$ \alpha(n) = \hat{\beta}^\top \Sigma \beta/\gamma^2, $ and report
\begin{equation} \label{eq:condition_sim}
    \widehat{\mathrm{sd}}(\sqrt{n}(\alpha(n)-\alpha_\star)) = \left(\frac{1}{m}\sum_{i=1}^m \left[\sqrt{n}(\alpha_i(n) -\alpha_\star)\right]^2 \right)^{1/2}
\end{equation}
over $m = 10,000$ repetitions (Figure \ref{fig:condition}). 
We observe that while the standard deviations
differ for varied choices of $\kappa$ and $\gamma$, they slightly decrease as $n$ increases and approach a constant asymptote. This suggests that $\sqrt{n}(\alpha(n)-\alpha_\star) = O_P(1)$ and the condition may be relaxed to $\tau_j\beta_j = o(1)$.   We do not expect that this latter condition can be relaxed further. To see this, note that this
condition $\tau_j\beta_j = o(1)$ was required for
\cite[Theorem~2]{sur18}, since this earlier result holds in the
setting where the empirical distribution of $\beta$ converges weakly
to a distribution with finite second moment. In the setting of
\cite{sur18}, $\tau_j = 1/\sqrt{n}$ and therefore, their condition
implies that $\tau_j\beta_j = o(1)$ for every coordinate. 

\begin{figure*}
    \begin{subfigure}[t]{0.5\textwidth}
        \centering
        \includegraphics[height=4cm,keepaspectratio]{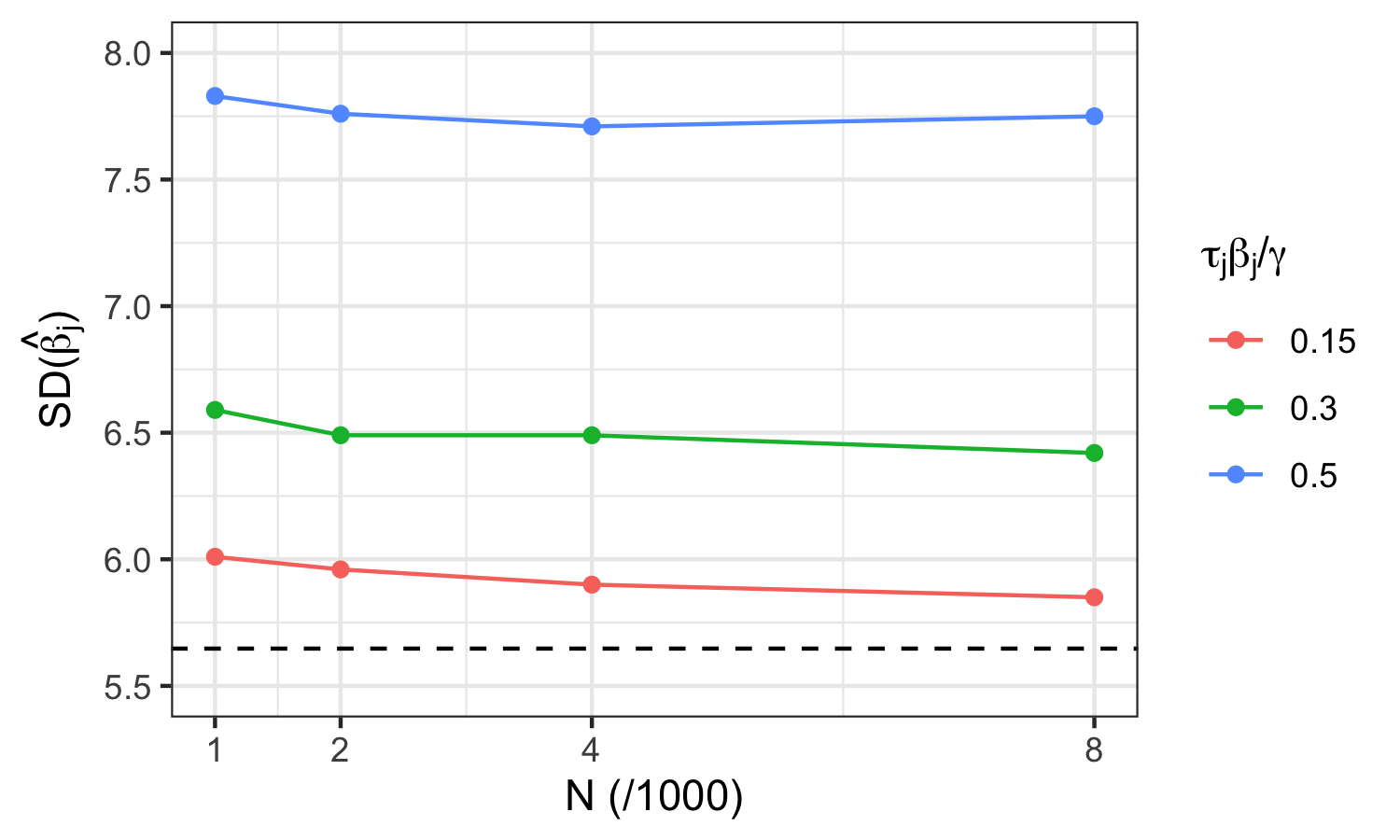}
        \caption{}
        \label{fig:sd_largebeta}
    \end{subfigure}%
        \hspace*{0em}
    \begin{subfigure}[t]{0.5\textwidth}
        \centering
        \includegraphics[height=4cm,keepaspectratio]{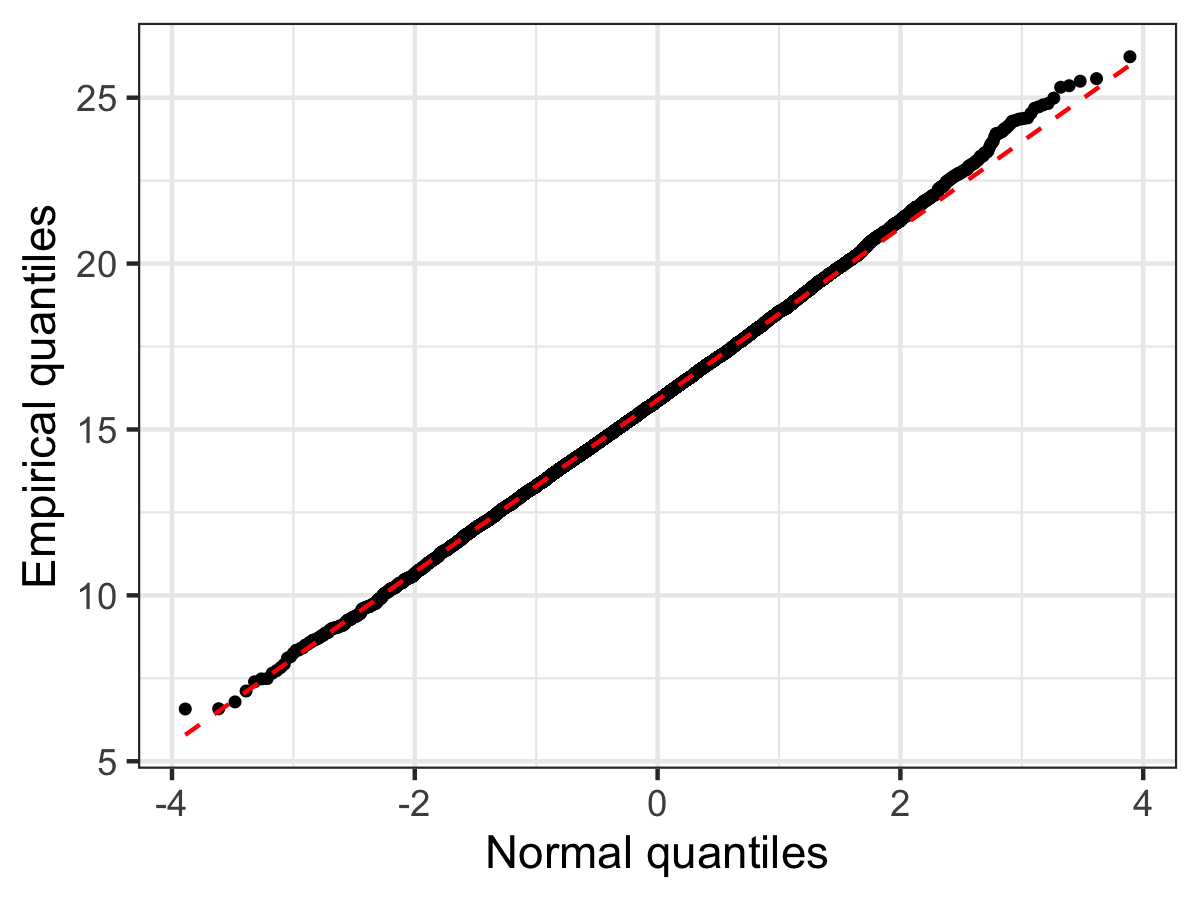}
        \caption{}
        \label{fig:qqplot_large_nonnull}
    \end{subfigure}%
    \caption{(a) Standard deviation of a single MLE $\hat{\beta}_j$ versus the sample size $n$ (shown on the $x$-axis as $n/1000$) when size of the coefficient $\beta_j$ varies at $\tau_j \beta_j/\gamma \in\{ 0.15, 0.3, 0.5\}$. $\mathrm{SD}(\hat{\beta}_j)$ is larger than than theoretical value (dashed line) when $\beta_j$ is large. The standard deviations are calculated from $B = 10,000$ repetitions. (b) Normal quantile plot for the non-null variable with $\tau_j \beta_j / \gamma = 0.15$ and $n=4000$ in Figure (a).}
    \hspace*{0em}
    \label{fig:newfig}
\end{figure*}

\begin{figure*}
    \centering
    \includegraphics[height = 4cm,keepaspectratio]{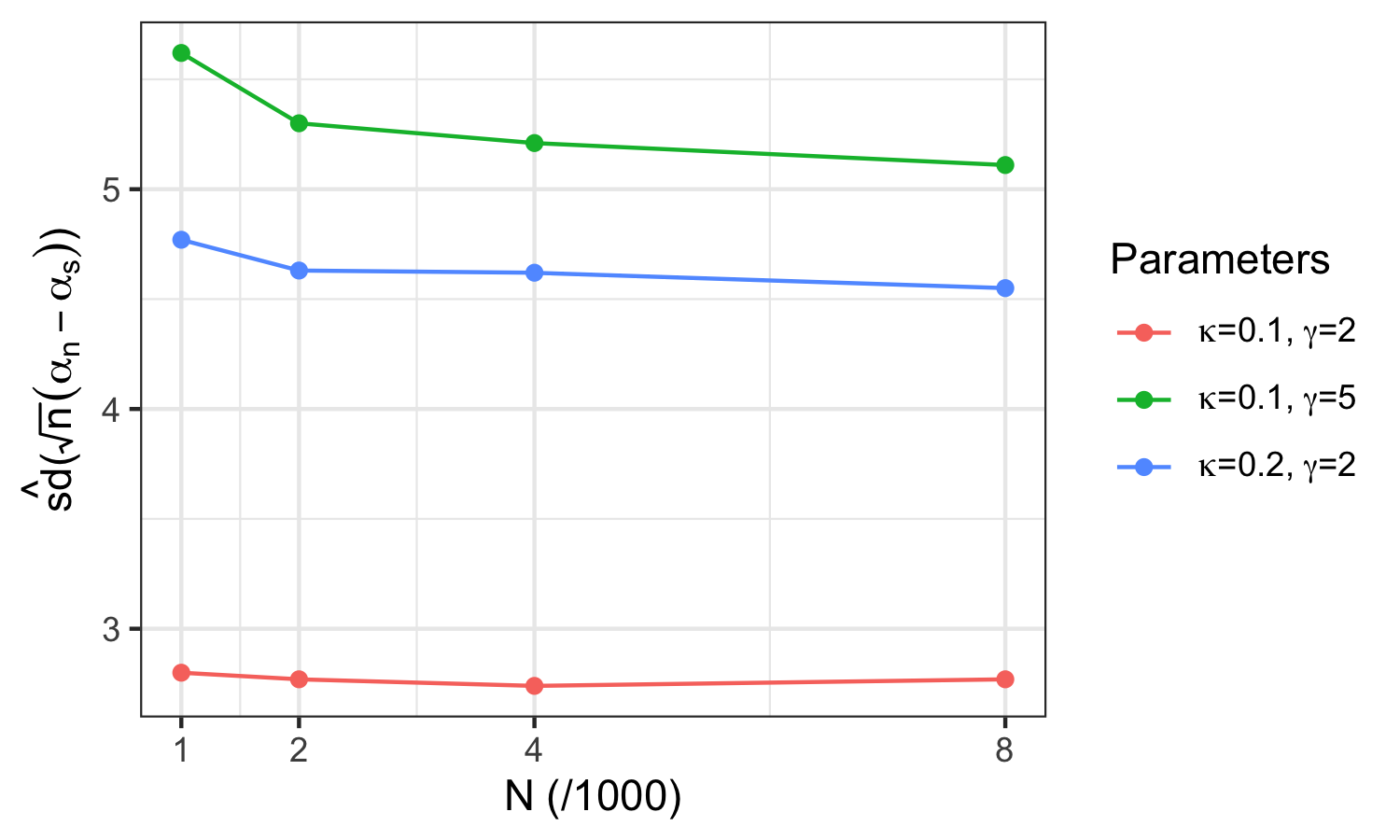}
    \caption{Plot of $\widehat{\mathrm{sd}}(\sqrt{n}(\alpha(n)-\alpha_\star))$ (see Eqn. \eqref{eq:condition_sim}) versus the sample size $n$ different parameters $\kappa$ and $\gamma$.  }
    \label{fig:condition}
\end{figure*}

\subsection{The empirical distribution of all MLE
    coordinates (single experiment)}\label{sec:bulk}

  The preceding section tells us about the finite-dimensional
  marginals of the MLE, e.g.~about the distribution of a given
  coordinate $\hat{\beta}_j$ when we repeat experiments. We now turn
  to a different type of asymptotics and characterize the limiting
  empirical distribution of {\em all} the coordinates calculated from
  a single experiment.
    
\begin{theorem} \label{thm:bulk} Let
    $c(n) = \lambda_{\max}
    (\mbf{\Sigma}(n))/\lambda_{\min}(\mbf{\Sigma}(n))$
    be the condition number of $\mbf{\Sigma}(n)$. Assume that
    $\limsup_{n\to\infty} \, c(n) < \infty$,
  and
    that $(\kappa,\gamma)$ lies in the region where the MLE exists
    asymptotically. Then the empirical cumulative distribution
    function of the rescaled MLE \eqref{eq:Z} converges pointwise to
    that of a standard normal distribution, namely, for each
    $t \in \R$,
    
\begin{equation}\label{eq:bulk_dep}
 \frac{1}{p}\sum_{i=1}^p \ind{ \zt_j \leq  t}\stackrel{\mathrm{P}}{\longrightarrow}\Phi(t).
\end{equation}
\end{theorem}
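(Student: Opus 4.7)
My plan is to combine Proposition~\ref{prop:indtodep} (reduction to i.i.d.\ covariates), Lemma~\ref{lemma:exact} (exact spherical stochastic representation), and Lemma~\ref{lemma:as_conv2} (a.s.\ limits of $\alpha(n),\sigma(n)$) to rewrite each $T_j$ as a standard-normal coordinate plus a perturbation that is empirically negligible.

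By Proposition~\ref{prop:indtodep} I work with $\hat{\mbf{\theta}} = \mbf{L}^\top\hat{\mbf{\beta}}$ and $\mbf{\theta} = \mbf{L}^\top\mbf{\beta}$; Lemma~\ref{lemma:exact} gives the distributional identity
\[
\hat{\mbf{\theta}} \stackrel{\mathrm{d}}{=} \alpha(n)\mbf{\theta} + \sigma(n)\,\frac{P_{\mbf{\theta}^\perp}\mbf{Z}}{\|P_{\mbf{\theta}^\perp}\mbf{Z}\|}, \qquad \mbf{Z}\sim\N(\mbf{0},\mbf{I}_p).
\]
Setting $\mbf{w}_j := \tau_j\mbf{L}^{-1}\mbf{e}_j$, one checks $\|\mbf{w}_j\|=1$ and $\langle\mbf{w}_j,\mbf{\theta}\rangle = \tau_j\beta_j$, and a short expansion yields
\[
T_j \stackrel{\mathrm{d}}{=} \xi_n\,G_j + \eta_n\,\tau_j\beta_j, \qquad G_j := \langle\mbf{w}_j,\mbf{Z}\rangle,
\]
where $\xi_n := \sqrt{n}\,\sigma(n)/(\sigma_\star\|P_{\mbf{\theta}^\perp}\mbf{Z}\|) \to 1$ almost surely (using $\sigma(n)^2\to\kappa\sigma_\star^2$, $\|P_{\mbf{\theta}^\perp}\mbf{Z}\|^2\sim\chi^2_{p-1}$, and $p/n\to\kappa$), and $\eta_n := \sqrt{n}(\alpha(n)-\alpha_\star)/\sigma_\star - \xi_n\langle\mbf{Z},\mbf{\theta}\rangle/\|\mbf{\theta}\|^2$ is a scalar independent of $j$. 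The $G_j$'s form a centered Gaussian vector with covariance $\mbf{D}\mbf{\Theta}\mbf{D}$, where $\mbf{D}:=\mathrm{diag}(\tau_j)$; the condition-number hypothesis forces the spectrum of $\mbf{D}\mbf{\Theta}\mbf{D}$ to be uniformly bounded.

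The proof splits in two. (i) For $F_p^G(t) := \frac{1}{p}\sum_j \ind{G_j \le t}$, the mean equals $\Phi(t)$; I would use $\mathrm{Cov}(\ind{G_j\le t},\ind{G_k\le t}) \lesssim |\rho_{jk}|$ combined with Cauchy--Schwarz and $\sum_{jk}\rho_{jk}^2 = \|\mbf{D}\mbf{\Theta}\mbf{D}\|_F^2 \le p\,\lambda_{\max}(\mbf{D}\mbf{\Theta}\mbf{D})^2 = O(p)$, which gives $\mathrm{Var}(F_p^G(t)) = O(p^{-1/2})\to 0$, hence $F_p^G(t)\convp\Phi(t)$. (ii) The perturbation $\eta_n\tau_j\beta_j$ is empirically negligible: the condition-number bound gives $\sum_j(\tau_j\beta_j)^2 \le \lambda_{\max}(\mbf{\Sigma})\|\mbf{\beta}\|^2 \le c(n)\gamma^2 = O(1)$; meanwhile $\eta_n^2/p \to 0$, because $(n/p)(\alpha(n)-\alpha_\star)^2 \to (1/\kappa)\cdot 0$ a.s.\ by Lemma~\ref{lemma:as_conv2} and $\langle\mbf{Z},\mbf{\theta}\rangle/\|\mbf{\theta}\|^2 = O_P(1)$. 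Markov then yields $\frac{1}{p}\#\{j : |\eta_n\tau_j\beta_j|>\delta\} \le \eta_n^2\sum_j(\tau_j\beta_j)^2/(p\delta^2) \convp 0$. A sandwich $\ind{G_j \le (t-\delta)/\xi_n} \le \ind{T_j \le t} \le \ind{G_j \le (t+\delta)/\xi_n}$ on the good indices, continuity of $\Phi$, and $\delta\downarrow 0$ deliver~\eqref{eq:bulk_dep}.

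\emph{Main obstacle.} The delicate step is simultaneously absorbing three sources of randomness into a single empirical average: the correlated per-coordinate Gaussians $G_j$, the scalar fluctuations $\xi_n,\eta_n$ shared by all coordinates, and the deterministic but potentially unbalanced profile $\{\tau_j\beta_j\}_{j\le p}$. The condition-number hypothesis on $\mbf{\Sigma}(n)$ is doing genuine work in both pieces---guaranteeing both $\lambda_{\max}(\mbf{D}\mbf{\Theta}\mbf{D}) = O(1)$ for the concentration of $F_p^G$ and $\sum_j(\tau_j\beta_j)^2 = O(1)$ for the perturbation bound---and it is not clear either estimate survives without such regularity on the covariance.
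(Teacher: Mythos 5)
Your proposal is correct and follows essentially the same route as the paper's proof: an exact spherical stochastic representation of the MLE, the almost-sure limits of $\alpha(n)$ and $\sigma(n)$, reduction to a correlated Gaussian vector with correlation matrix $\mbf{R}=\mbf{D}\mbf{\Theta}\mbf{D}$ whose weak correlation is certified by the condition-number bound $\tfrac{1}{p}\left(\sum_{j,k}r_{jk}^2\right)^{1/2}\le c(n)/\sqrt{p}$, and a sandwich argument absorbing the shared scalar fluctuations and the $\tau_j\beta_j$ perturbation. The only differences are presentational: where you re-derive the variance bound for the empirical cdf from $|\mathrm{Cov}(\ind{G_j\le t},\ind{G_k\le t})|\lesssim|\rho_{jk}|$, the paper invokes the Azriel--Schwartzman theorem as a black box (Lemma \ref{lemma:azriel}), and where you discard a vanishing fraction of bad indices by a Markov count, the paper controls the perturbation through Lipschitz mollifiers of the indicator (Lemma \ref{lemma:lip_close} and Corollary \ref{corollary:lip_w}).
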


As explained in the
Introduction, this says that empirical averages of functionals of the
marginals have a limit \eqref{eq:weakconv}.  By \cite[Corollary~B.2]{zhao2021_supp},
this implies that the empirical distribution of
$\mbf{\zt}$ converges weakly to a standard Gaussian, in
probability.

The statement above can be extended to general testing functions 
(the proof is provided in the supplementary material \cite[Section~C]{zhao2021_supp}):
\begin{theorem}\label{thm:general_testfun}
Consider any pseudo-Lipschitz function of order 2 \footnote{A
function $\psi\,:\,\R^m\to \R$ is said to be pseudo-Lipschitz of
  order $k$ if there exists a constant $L>0$ such that for all
  $\mbf{t}_0, \mbf{t}_1\in\R^m$,
  $\|\psi(\mbf{t}_0) - \psi(\mbf{t}_1)\| \leq L\,
  (1+\|\mbf{t}_0\|^{k-1}+\|\mbf{t}_1\|^{k-1}) \, \|\mbf{t}_0 -
  \mbf{t}_1\|$.}
 and suppose the conditions of Theorem \ref{thm:finitemle} hold. Further, assume that the empirical distribution sequence $\sum_{j=1}^p\delta_{\sqrt{n} \tau_j \beta_j}/p$  converges weakly to a distribution $\Pi$ with finite second moment and that $\frac{1}{p}\sum_{j=1}^p n\tau_j^2\beta_j^2 \stackrel{p}{\longrightarrow} \E{\eta^2}$ for $\eta\sim\Pi$, then 
\begin{equation}
\frac{1}{p}\sum_{i=1}^p \psi(\sqrt{n} \tau_j (\hat{\beta}_j - \alpha_\star \beta_j), \sqrt{n} \tau_j \beta_j) \stackrel{P}{\longrightarrow} \E{\psi(\sigma_\star Z, \eta)}. 
\end{equation}
\end{theorem}

In the remainder of this section, we study the empirical
accuracy of Theorem \ref{thm:bulk} in finite samples through some
simulated examples.

The nature of this result is similar to \cite[Theorem~2]{sur18}. However, the techniques from \cite{sur18} cannot be applied
here. To prove Theorem \ref{thm:bulk}, we connect the rescaled MLE
vector $\mbf{T}$ with a correlated Gaussian vector through a
stochastic representation result, similar in spirit to Lemma
\ref{lemma:exact}. This is done in \cite[Proposition~B.1]{zhao2021_supp} and \cite[Corollary~B.1]{zhao2021_supp}. The rest of the proof focuses on analyzing this representation, see \cite[Section~B]{zhao2021_supp}.

\subsubsection{Finite sample accuracy} We consider an experiment with
dimensions $n$ and $p$ the same as that for Figure \ref{fig:qqplot},
and the regression vector sampled similarly.
According to \eqref{eq:bulk_dep}, about $(1-\alpha)$ of all the $\beta_j$'s should lie within the
corresponding intervals \eqref{eq:cisingle}. 

Table \ref{tab:cov_prop} shows the proportion of $\beta_j$'s covered
by these intervals for a few commonly used confidence
levels $(1-\alpha)$.\footnote{Note that, in Table \ref{tab:cov_prop_nonnull} we investigated a single coordinate across many replicates, whereas here we consider all the coordinates in each instance.} 
The proportions are as predicted for each of the confidence levels, and
every covariance we simulated from.

The four columns in Table \ref{tab:cov_prop} correspond to different
covariance matrices; they include a random correlation matrix (details
are given below), a correlation matrix from an AR(1) model whose
parameter is either set to $\rho=0.8$ or $\rho=0.5$, and a covariance
matrix set to be the identity.  The random correlation matrix is
sampled as follows: we randomly pick an orthogonal matrix $\mbf{U}$,
and eigenvalues $\lambda_1,\dots, \lambda_p$ i.i.d.~from a chi-squared
distribution with 10 degrees of freedom. We then form a positive
definite matrix $\mbf{B} = \mbf{U}^\top\mbf{\Lambda}\mbf{U}$ from
these eigenvalues, where
$\mbf{\Lambda} = \mathrm{diag}(\lambda_1,\dots, \lambda_p)$.
$\mbf{\Sigma}$ is the correlation matrix obtained from $\mbf{B}$ by
scaling the variables to have unit variance.

\begin{table*}
\caption{Each cell reports the proportion of \emph{all} the
      variables in each run falling within the corresponding intervals
      from \eqref{eq:cisingle}, averaged over $B=100,000$ repetitions;
      the standard deviation is given between parentheses.}
\label{tab:cov_prop}
\begin{tabular}{@{}ccccc@{}}
 \hline
        Nominal coverage & & & &\\
$100(1-\alpha)$ & Random & $ \rho=0.8$ & $\rho=0.5$ & Identity \\
        \hline
      99  & 99.178 (0.002) & 99.195 (0.002) & 99.187 (0.002)& 99.175 (0.002)\\
      98  &97.873 (0.003) & 97.908 (0.003) & 97.890 (0.003)& 97.865 (0.003)\\
      95  & 94.826 (0.005)& 94.884 (0.005)&
      94.857 (0.005)& 94.811 (0.005)\\
      90  & 89.798 (0.007)& 89.883 (0.007)& 
      89.847 (0.007)& 89.780 (0.007)\\
      80  &  79.784 (0.009)& 79.896 (0.009) &
      79.837 (0.009)& 79.751 (0.009)\\
      \hline
\end{tabular}
\end{table*}

\section{The distribution of the LRT}\label{sec:lrt}
 Lastly, we study the distribution of the
log-likelihood ratio (LLR) test statistic
\begin{equation}\label{eq:llr-defn}
    \llr_j = \max_{\mbf{b}} \ell(\mbf{b};\mbf{X},\mbf{y}) -
    \max_{\mbf{b}:\: {b}_j=0} \ell(\mbf{b};\mbf{X},\mbf{y}), 
  \end{equation}
  which is routinely used to test whether the $j$th variable is in the
  model or not; i.~e.~whether $\beta_j=0$ or not. 
  
 \begin{theorem}\label{thm:llr}
  Assume that we are in the $(\kappa,\gamma)$ region where the MLE
  exists asymptotically. Then under the null (i.e.~$\beta_j = 0$), 
\[
    2\llr_j \,\, \convd \,\, \frac{\kappa\sigma_\star^2}{\lambda_\star}\chi^2_1.
  \]
Further, for every finite $\ell$, twice the LLR for testing
  $\ell$ null hypotheses $\beta_{j_1}=\dots=\beta_{j_\ell}=0$ is
  asymptotically distributed as
  $(\kappa\sigma_\star^2/\lambda_\star)\,\, \chi^2_\ell$.
\end{theorem}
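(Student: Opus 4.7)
The plan is to leverage the change-of-variables trick used throughout the paper: Proposition~\ref{prop:indtodep} reduces a logistic problem with arbitrary Gaussian covariance to one with i.i.d.~standard Gaussian covariates. Crucially, the log-likelihood is invariant under this reparametrization (identity~\eqref{eq:transform_mle}), hence so is the LLR, and the limiting distribution of the LLR in the independent-covariate case has already been established in \cite{sur18}. What remains is to verify that the coordinate-null constraint $\beta_{j_1}=\cdots=\beta_{j_\ell}=0$ transforms into an analogous coordinate-null constraint $\theta_{j_1}=\cdots=\theta_{j_\ell}=0$ after the reparametrization; this is arranged by an appropriate choice of Cholesky factorization.

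Concretely, fix $\mathcal{S}=\{j_1,\dots,j_\ell\}$ and, after permuting coordinates so that the indices in $\mathcal{S}$ occupy the last $\ell$ positions, take a Cholesky factorization $\mbf{\Sigma}=\mbf{L}\mbf{L}^{\top}$ with $\mbf{L}$ lower triangular. Set $\mbf{\theta}=\mbf{L}^{\top}\mbf{\beta}$; by Proposition~\ref{prop:indtodep}, $\mbf{\theta}$ is the true regression vector in a logistic model with i.i.d.~$\mathcal{N}(\mbf{0},\mbf{I}_p)$ covariates, and the signal strength is preserved since $\|\mbf{\theta}\|^2=\mbf{\beta}^{\top}\mbf{\Sigma}\mbf{\beta}\to\gamma^2$. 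Because $\mbf{L}^{\top}$ is upper triangular, the $k$th entry of $\mbf{\theta}$ depends only on $\beta_l$ with $l\ge k$; restricted to the trailing $\ell$ coordinates this gives $\mbf{\theta}_{\mathcal{S}}=\mbf{L}_{\mathcal{S},\mathcal{S}}^{\top}\mbf{\beta}_{\mathcal{S}}$ with $\mbf{L}_{\mathcal{S},\mathcal{S}}$ invertible (triangular with positive diagonal), so $\mbf{\beta}_{\mathcal{S}}=\mbf{0}$ iff $\mbf{\theta}_{\mathcal{S}}=\mbf{0}$. Applying \eqref{eq:transform_mle} to both the unconstrained and null-constrained maxima of the log-likelihood then yields
\begin{equation*}
\llr_{\mathcal{S}}(\mbf{X},\mbf{y}) \;=\; \max_{\mbf{b}}\ell(\mbf{b};\mbf{Z},\mbf{y}) \;-\; \max_{\mbf{b}:\,\mbf{b}_{\mathcal{S}}=\mbf{0}}\ell(\mbf{b};\mbf{Z},\mbf{y}),
\end{equation*}
where $\mbf{Z}=\mbf{X}\mbf{L}^{-\top}$ has i.i.d.~standard Gaussian rows.

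The right-hand side is exactly the LLR for testing $\ell$ coordinate nulls in the independent-covariate logistic problem of \cite{sur18}. Their LLR theorem, applied in our regime $p/n\to\kappa$ and $\|\mbf{\theta}\|^2\to\gamma^2$, gives $2\,\llr_{\mathcal{S}}\convd(\kappa\sigma_\star^2/\lambda_\star)\,\chi^2_\ell$, with $(\alpha_\star,\sigma_\star,\lambda_\star)$ the unique solution to the system~\eqref{eq:system_equation}; these constants depend only on $(\kappa,\gamma)$ and not on $\mbf{\Sigma}$, which is consistent with the invariance we have just exploited. The genuinely hard work is contained in the original LLR theorem of \cite{sur18}, whose proof uses approximate message passing (or equivalently the convex Gaussian min--max analysis of \cite{salehi19}) to characterize the null-constrained MLE; our reduction is cheap by comparison. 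The only real pitfall is the combinatorial bookkeeping: one must choose the Cholesky permutation so that the coordinate-null in $\mbf{\beta}$-space maps to a coordinate-null in $\mbf{\theta}$-space rather than to a generic affine subspace, and this is precisely why placing $\mathcal{S}$ in the trailing block of the triangular factor is essential.
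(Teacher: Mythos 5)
Your proposal is correct and follows essentially the same route as the paper: the paper likewise picks the Cholesky factorization of \eqref{eq:Lchoice} so that the tested coordinates sit in the trailing triangular block, observes via \eqref{eq:transform_mle} that the LLR is invariant and that $b_j=0$ iff $b'_j=0$, and then invokes the independent-covariate LLR theorem of \cite{sur18} with $\gamma^2=\lim \mbf{\beta}^\top\mbf{\Sigma}\mbf{\beta}$. Your treatment of the multi-coordinate case (invertibility of the triangular block $\mbf{L}_{\mathcal{S},\mathcal{S}}$) is slightly more explicit than the paper's, but the argument is the same.
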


Invoking results from Section \ref{subsec:deptoindep}, we will show that this
is a rather straightforward extension of  \cite[Theorem~4]{sur18}, which  deals with independent covariates, see also \cite[Theorem~1]{sur19}. Choosing $\mbf{L}$ to be the same as in \eqref{eq:Lchoice} after permuting $\beta_j$ to be the last variable and setting
$\mbf{b}'=\mbf{L}^{\top}\mbf{b}$ tell us that $b'_j=0$ if and only if $b_j=0$. Hence, $\mathrm{LLR_j}$ reduces to 
\begin{align}\label{eq:llrred}
    \llr_j & =  \max_{\mbf{b}} \ell(\mbf{L}^\top\mbf{b}; \mbf{X}\mbf{L}^{-\top}, \mbf{y}) - \max_{\mbf{b}: b_j = 0} \ell(\mbf{L}^\top\mbf{b}; \mbf{X}\mbf{L}^{-\top}, \mbf{y}) \\
 &   = \max_{\mbf{b}'} \ell(\mbf{b}'; \mbf{X}\mbf{L}^{-\top},\mbf{y}) -\max_{\mbf{b}': b'_j=0} \ell(\mbf{b}'; \mbf{X}\mbf{L}^{-\top},\mbf{y}),
\end{align}
which is the  log-likelihood ratio statistic in a model
with covariates drawn i.i.d.~from $\N(\mbf{0},\mbf{I}_p)$ and 
regression coefficient given by
$\mbf{\theta}=\mbf{L}^\top \mbf{\beta}$. This in turn satisfies $\theta_j = 0$ if and only if $\beta_j=0$ so that we
can think of the LLR above as testing $\theta_j = 0$.
Consequently, the asymptotic
distribution is the same as that given in
\cite[Theorem~4]{sur18} with
$\gamma^2 = \lim_{n \rightarrow \infty} \|\mbf{\theta}\|^2 = \lim_{n
  \rightarrow \infty} \mbf{\beta}^\top \mbf{\Sigma} \mbf{\beta}$.
The equality of the likelihood ratios implies that to study the
finite sample accuracy of Theorem \ref{thm:llr}, we may just as
well assume we have independent covariates; hence, we refer the
readers to \cite{sur18} for empirical results detailing the quality of
the rescaled chi-square approximation in finite samples.

\section{Accuracy  with estimated parameters}\label{sec:empirical}

In practice, the signal strength $\gamma^2$ and conditional variance
$\tau_j^2$ are typically not known a priori. In this section, we plug
in estimates of these quantities and investigate their empirical
performance. We focus on testing a null variable and constructing
confidence intervals.

The parameters are the same as in Section \ref{sec:finitemle}. In brief, we
set $n=4,000$, $p=800$ (so that $\kappa=0.2$), and $\gamma^2 = 5$. The
covariates follow an AR(1) model with $\rho= 0.5$ and 
$\Sigma_{jj} = 1$. 

\subsection{Estimating parameters}\label{sec:estimate-parameter}

We here explain how to estimate the signal strength $\gamma^2$ and
conditional variance $\tau_j^2$ needed to describe the distribution of
the LLR and MLE.

To estimate the signal strength, we use the \emph{ProbeFrontier}
method introduced in \cite{sur18}. As we have seen in Section
\ref{sec:exist-mle}, for each $\gamma$, there is a corresponding
problem dimension $\kappa(\gamma)$ on the phase transition curve, see
Figure \ref{fig:formula}: once $\kappa>\kappa(\gamma)$, the MLE no
longer exists asymptotically \cite{candes18}. The \emph{ProbeFrontier}
method searches for the smallest $\kappa$ such that the MLE ceases to
exist by sub-sampling observations. Once we obtain $\hat{\gamma}$, 
we set
$(\hat{\alpha},\hat{\sigma},\hat{\lambda})$ to be the solution to the
system of equations with parameters $(\kappa,\hat{\gamma})$. Because
the \emph{ProbeFrontier} method only checks whether the points are
separable, the quality of the estimate $\hat{\gamma}$ does not depend
upon whether the covariates are independent or not. We therefore
expect good performance across the board.

As to the conditional variance, since the covariates are Gaussian, it
can be estimated by a simple linear regression. Let
$\mbf{X}_{\bullet, -j}$ be the data matrix without the $j$th column,
and consider the residual sum of squares $\text{RSS}_j$ obtained by
regressing the $j$th column $\mbf{X}_{\bullet,j}$ onto
$\mbf{X}_{\bullet,-j}$. Then 
\[
\text{RSS}_j \,\, \sim {\tau_j^2}\,\, \chi^2_{n-p+1}. 
\] 
Hence,
\begin{equation}\label{eq:tau_hat}
    \hat{\tau}_j^2 = \frac{\text{RSS}_j / n}{1-\kappa}
\end{equation}
is nearly unbiased for $\tau_j^2$.\footnote{We also have
  $\text{RSS}_j = 1/\Theta_{jj}$, $\mbf{\Theta} = (\mbf{X}^\top \mbf{X})^{-1}$.} 
  
In our example, the covariates follow an AR(1) model and there is a
natural estimate of $\rho$ by maximum likelihood. This yields an
estimated covariance matrix $\hat{\mbf{\Sigma}}(\hat{\rho})$
parameterized by $\hat{\rho}$, which we then use to estimate the
conditional variance $\hat{\tau}^2_j(\hat{\rho})$. Below, we use both
the nonparametric estimates $\hat{\tau}_j$ and parametric estimates
$\hat{\tau}_j(\hat{\rho})$.

\subsection{Empirical performance of a $t$-test}\label{subsec:ttestest}

Imagine we want to use Theorem \ref{thm:finitemle} to calibrate a test
to decide whether $\beta_j=0$ or not. After plugging in estimated
parameters, a p-value for a two-sided test takes the form
\begin{equation}\label{eq:p_val_t}
    \hat{p}_j = 2\bar\Phi(\sqrt{n} \hat{\tau}_j|\hat{\beta}_j|/\hat{\sigma}),
\end{equation}
where $\bar\Phi(t) = \mathbb{P}(\mathcal{N}(0,1) > t)$.  In Table
\ref{tab:pval_t}, we report the proportion of p-values calculated from
\eqref{eq:p_val_t}, below some common cutoffs. To control type-I
errors, the proportion of p-values below 10\% should be at most about
10\% and similarly for any other level. The p-values computed from
true parameters show a correct behavior, as expected. If we use
estimated parameters, the p-values are also accurate and are as good
as those obtained from true parameters. In comparison, p-values from
classical theory are far from correct, as shown in Column 4. 

\begin{table}
\caption{Empirical performance of a $t$-test from
        \eqref{eq:p_val_t}. Each cell reports the p-value probability
      and its standard error (in parentheses) estimated over $B=10,000$
      repetitions. The first two columns use \emph{ProbeFrontier} to
      estimate the problem parameter $\hat{\sigma}$, and the two
      estimates of conditional variance from Section
      \ref{sec:estimate-parameter}. 
      The third column assumes knowledge of the signal-to-noise parameter
      $\gamma$.  The last column uses the normal approximation from R.}
\label{tab:pval_t}
\begin{tabular}{@{}p{3.5cm} cccc@{}}
   \hline
    & 1 & 2 & 3 &4\\
  &  $(\hat{\tau},\hat{\sigma})$ & $(\hat{\tau}(\hat{\rho}),\hat{\sigma})$   & $(\tau,\sigma_\star)$ & Classical \\
    \hline
    $\mathbb{P}(\text{P-value}\leq 10\%)$ & 10.09\% (0.30\%)&10.14\% (0.30\%) & 10.22\% (0.30\%) & 17.80\% (0.38\%)\\
    $\mathbb{P}(\text{P-value}\leq 5\%)$ &5.20\% (0.22\%) & 5.23\% (0.22\%) & 5.24\% (0.22\%) & 10.73\% (0.31\%)\\
    $\mathbb{P}(\text{P-value}\leq 1\%)$ &1.16\% (0.11\%)& 1.22\% (0.11\%) & 1.33\% (0.11\%) & 3.72\% (0.19\%)\\
    $\mathbb{P}(\text{P-value}\leq 0.5\%)$ &0.68\% (0.08\%) &0.70\% (0.08\%) & 0.74\% (0.08\%) & 2.43\% (0.15\%)\\
    \hline
 
\end{tabular}
\end{table}

\subsection{Coverage proportion}

We proceed to check whether the confidence intervals constructed from the
estimated parameters
\begin{equation}\label{eq:ciemp}
    \left[\frac{1}{\hat{\alpha}}\left(\hat{\beta}_j-\frac{\hat{\sigma}}{\sqrt{n}\hat{\tau}_j}z_{(1-\alpha/2)}\right),\frac{1}{\hat{\alpha}}\left(\hat{\beta}_j+\frac{\hat{\sigma}}{\sqrt{n}\hat{\tau}_j}z_{(1-\alpha/2)}\right)\right]
\end{equation}
achieve the desired coverage property. 

\begin{table}
\caption{Coverage proportion of a single variable. Each cell reports the proportion of times a variable $\beta_j$ is covered by the corresponding confidence interval from \eqref{eq:ciemp}, calculated over $B = 10,000$ repetitions; we chose the variable  to be  the same null coordinate as in Section \ref{subsec:ttestest}.  
    The standard errors are given between parentheses. The first two columns use estimated parameters, and the last one uses the true parameters.}
\label{tab:cov_prop_emp}
\begin{tabular}{@{}cccc@{}}
\hline
    Nominal coverage & 1& 2&3 \\
  $100(1-\alpha)$  & $(\hat{\tau},\hat{\sigma})$ &$(\hat{\tau}(\hat{\rho}),\hat{\sigma})$   & $(\tau,\sigma_\star)$ \\  
    \hline
    99.5 & 99.32 (0.08) & 99.30 (0.08) & 99.26 (0.09)\\
    99 & 98.84 (0.11) & 98.78 (0.11) & 98.67 (0.11) \\
    95 & 94.80 (0.22) & 94.77 (0.22) &94.76 (0.22) \\
    90 & 89.91 (0.30) & 89.86 (0.30) & 89.78 (0.30)\\
    \hline
\end{tabular}
\end{table}

\begin{table}
\caption{Proportion of variables inside the confidence
        intervals \eqref{eq:ciemp}. Each cell reports the proportion of \emph{all} the variables in each run falling within the corresponding confidence intervals from \eqref{eq:ciemp}, averaged
      over $B=10,000$ repetitions (standard errors in parentheses). The first two columns use estimated parameters, and the last one uses the true parameters.}
\label{tab:ci_est}
\begin{tabular}{@{}cccc@{}}
\hline
Nominal coverage & 1& 2& 3\\
  $100(1-\alpha)$  & $(\hat{\tau},\hat{\sigma})$ &$(\hat{\tau}(\hat{\rho}),\hat{\sigma})$   & $(\tau,\sigma_\star)$ \\
    \hline
    98 & 97.96 (0.01)&97.95 (0.01) & 97.85 (0.01)\\
    95 & 95.01 (0.01) &95.00 (0.01) & 94.85 (0.02)\\
    90 & 89.92 (0.02)& 89.91 (0.02) & 89.72 (0.02)\\
    80 & 79.99 (0.02) &79.99 (0.02) & 79.77 (0.03)\\
    \hline
\end{tabular}
\end{table}

We first test this in the context of Theorem \ref{thm:finitemle}, in particular \eqref{eq:nonnullsingle}. Table \ref{tab:cov_prop_emp} reports the proportion of times a single coordinate lies in the corresponding confidence interval from \eqref{eq:ciemp}.
 We observe that the coverage proportions are close to the respective targets, even with the estimated parameters. 

 Moving on, we study the accuracy of the estimated parameters in light
 of Theorem \ref{thm:bulk}.  This differs from our previous
 calculation: Table \ref{tab:cov_prop_emp} focuses on whether a single
 coordinate is covered, but now we compute the proportion of {\em all}
 the $p=800$ variables falling within the respective confidence
 intervals from \eqref{eq:ciemp}, in each single experiment. We report
 the mean of these proportions (Table \ref{tab:ci_est}), computed
 across 10,000 repetitions. Ideally, the proportion should be about
 the nominal coverage and this is what we observe.

\subsection{Empirical performance of the LRT} 

Lastly, we examine p-values for the LRT when the signal strength 
$\gamma^2$ is unknown. The p-values take the form
\begin{equation}\label{eq:p_val_lrt}
    \hat{p}_j = \prob\left(\chi^2_1\geq
      \frac{\hat{\lambda}}{\kappa\hat{\sigma}^2}\; 2\mathrm{LLR}_j\right)
\end{equation}
once we plug in estimated values for $\lambda_\star$ and
$\sigma_\star$. Table \ref{tab:pval_dev} displays the proportion of
p-values below some common cutoffs for the same null coordinate as in
Table \ref{tab:pval_t}.  Again, classical theory yields a gross
inflation of the proportion of p-values in the lower tail.  In
contrast, p-values from either estimated or true parameters display
the correct behavior.

\begin{table}
\caption{Empirical performance of the  LRT.  Each cell reports the p-value probability and its
      standard error (in parentheses) estimated over $B=10,000$
      repetitions. The first column uses \emph{ProbeFrontier}
      estimated factor $\hat{\lambda}/\kappa\hat{\sigma}^2$ whereas
    the second uses $\lambda_\star/\kappa\sigma_\star^2$. 
    The last column displays the results from classical theory.}
\label{tab:pval_dev}
\begin{tabular}{@{}p{2.5cm}c c c@{}}
 \hline
  & Estimated & True & Classical \\
    \hline
    $\mathbb{P}(\text{P-value}\leq 10\%)$ & 10.04\% (0.30\%)&10.06\% (0.30\%) & 17.86\% (0.38\%) \\
    $\mathbb{P}(\text{P-value}\leq 5\%)$ &5.19\% (0.22\%) & 5.25\% (0.22\%) &  10.76\% (0.31\%)\\
    $\mathbb{P}(\text{P-value}\leq 1\%)$ &1.17 \% (0.11\%)& 1.18\% (0.11\%) & 3.75\% (0.19\%)\\
    $\mathbb{P}(\text{P-value}\leq 0.5\%)$ &0.68\% (0.08\%) &0.69\% (0.08\%) & 2.49\% (0.15\%)\\
    \hline
\end{tabular}
\end{table}

\section{A sub-Gaussian example} \label{sec:sub-gaussian}

Our model assumes that the covariates arise from a multivariate normal
distribution. As in \cite[Section 4.g]{sur18}, however, we expect that
our results apply to a broad class of covariate distributions, in
particular, when they have sufficiently light tails. To test this, we
consider a logistic regression problem with covariates drawn from a
sub-Gaussian distribution that is inspired by genetic studies, and
examine the accuracy of null p-values and confidence intervals
proposed in this paper.

Since the signal strength $\gamma^2$ and conditional variances
$\tau_j^2$ are unknown in practice, we use throughout the
\emph{ProbeFrontier} method\footnote{Here, we resample 10 times for
  each $\kappa$.} and \eqref{eq:tau_hat} to obtain accurate estimates.

\subsection{Model setting}

In genome-wide association studies (GWAS), one often wishes to
determine how a binary response $Y$ depends on single nucleotide
polymorphisms (SNPs); here, each sample of the covariates measures the
genotype of a collection of SNPs, and typically takes on values in
$\{0,1,2 \}^p$. Because neighboring SNPs are usually correlated, GWAS
inspired datasets form an excellent platform for testing our theory.

Hidden Markov Models (HMMs) are a broad class of distributions that
have been widely used to characterize the behavior of SNPs
\cite{sesia18,rastas05,stephens06,kimmel05}.  Here, we study the
applicability of our theory when the covariates are sampled from a
class of HMMs, and consider the specific model implemented in the
fastPHASE software (see \cite[Section 5]{sesia18} for details) that
can be parametrized by three vectors
$(\mbf{r}, \mbf{\eta}, \mbf{\theta})$. We generate $n = 5000$
independent observations $(\mbf{X}_i, y_i)_{1 \leq i \leq n}$ by first
sampling $\mbf{X}_i$ from an HMM with parameters
$\mbf{r}=\mbf{r}_0,\mbf{\eta}=\mbf{\eta}_0,
\mbf{\theta}=\mbf{\theta}_0$ and $p=1454 $, so that $\kappa =0.29$,
and then sampling
$y_i \sim \mathrm{Ber}(\sigma(\mbf{X}_i^{\top}\mbf{\beta}))$. 
The \verb+SNPknock+ package \cite{snpknock} was used for sampling the
covariates and the parameter values are available at
\url{https://github.com/zq00/logisticMLE}.  We then standardize
the design matrix so that each column has zero mean and unit norm.
The regression coefficients are obtained as follows: we randomly pick
100 coordinates to be i.i.d.~draws from a mean zero normal
distribution with standard deviation 10, and the remaining coordinates
vanish. We repeat this experiment $B=5000$ times.

\subsection{Accuracy of null p-values}
We focus on a single null coordinate and, across the $B$ replicates, calculate p-values based on four test statistics---(a) the classical $t$-test,  which yields the p-value formula  $2\bar{\Phi}(\sqrt{n}|\hat{\beta}_j|/\hat{\sigma}_j)$; here $\hat{\sigma}_j$ is taken to be the estimate of the standard error from \verb+R+, (b) the classical LRT, (c) the $t$-test suggested by Theorem \ref{thm:finitemle}; in this case, the formula is the same as in (a), except that $\hat{\sigma}_j = \hat{\sigma}/\hat{\tau}_j$, where $\hat{\sigma}$ is estimated from \emph{ProbeFrontier} and $\hat{\tau}_j$  from \eqref{eq:tau_hat}, and finally, (d) the LRT based on Theorem \ref{thm:llr}; here again, the rescaling constant is specified via the estimates $\hat{\sigma}$, $\hat{\lambda}$ produced by \emph{ProbeFrontier}. The histograms of the classical p-values are shown in Figures \ref{fig:ex_tr} and  \ref{fig:ex_llr_unadj}---these are far from the uniform distribution, with severe inflation near the lower tail. The histograms of the two sets of p-values based on our theory are displayed in Figures \ref{fig:ex_t_hist} and \ref{fig:ex_llr_hist}, whereas the corresponding empirical cdfs can be seen in Figures \ref{fig:ex_t_ecdf} and \ref{fig:ex_llr_ecdf}. In both of these cases, we observe a remarkable proximity to the uniform distribution. Furthermore, Table \ref{tab:ex_pval} reports the proportion of null p-values below a collection of thresholds; both the $t$-test and the LRT suggested by our results provide accurate control of the type-I error. These empirical observations indicate that our theory likely applies to a much broader class of non-Gaussian distributions. 

\begin{figure*}
    \begin{subfigure}[t]{0.3\textwidth}
        \centering
        \includegraphics[width=1\textwidth]{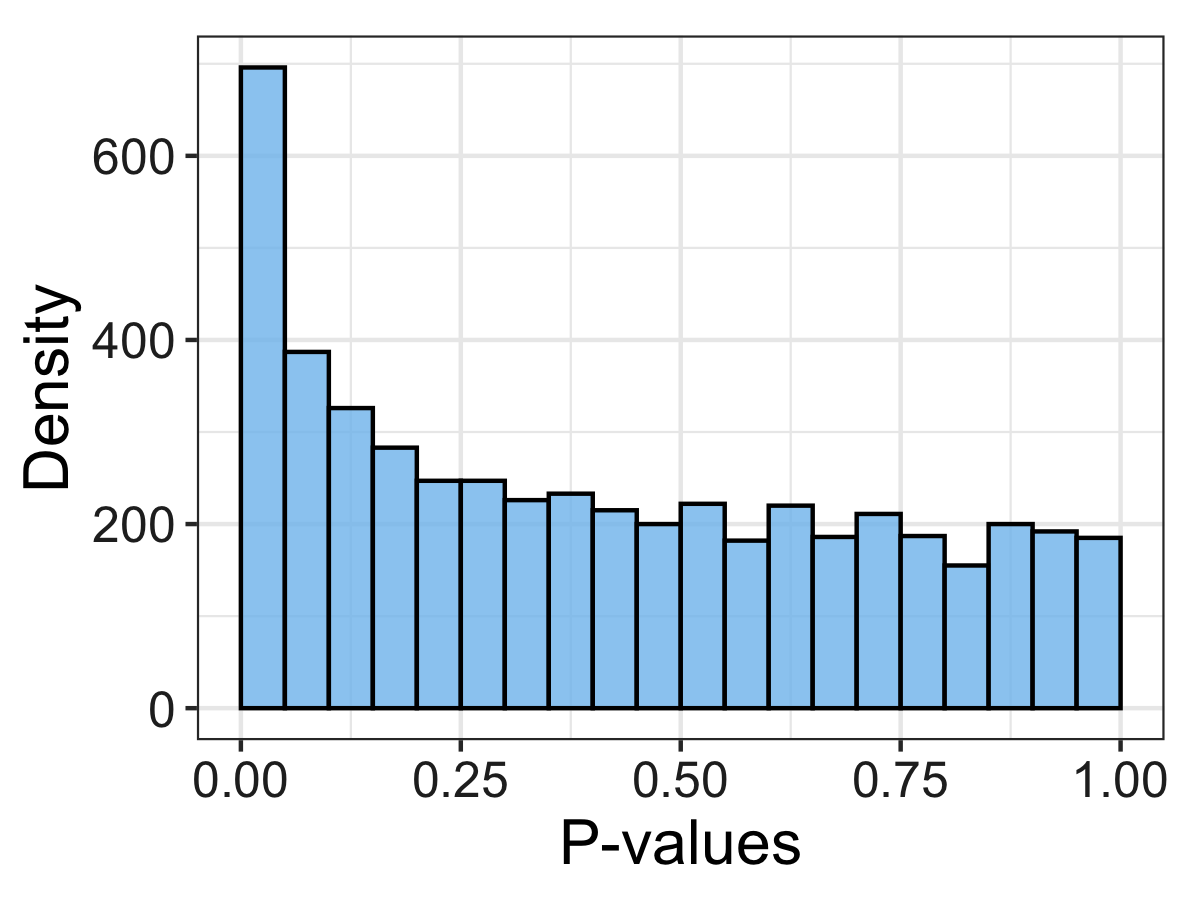}
        \caption{}
        \label{fig:ex_tr}
    \end{subfigure}
    \hspace*{0em}
    \begin{subfigure}[t]{0.3\textwidth}
        \centering
        \includegraphics[width=1\textwidth]{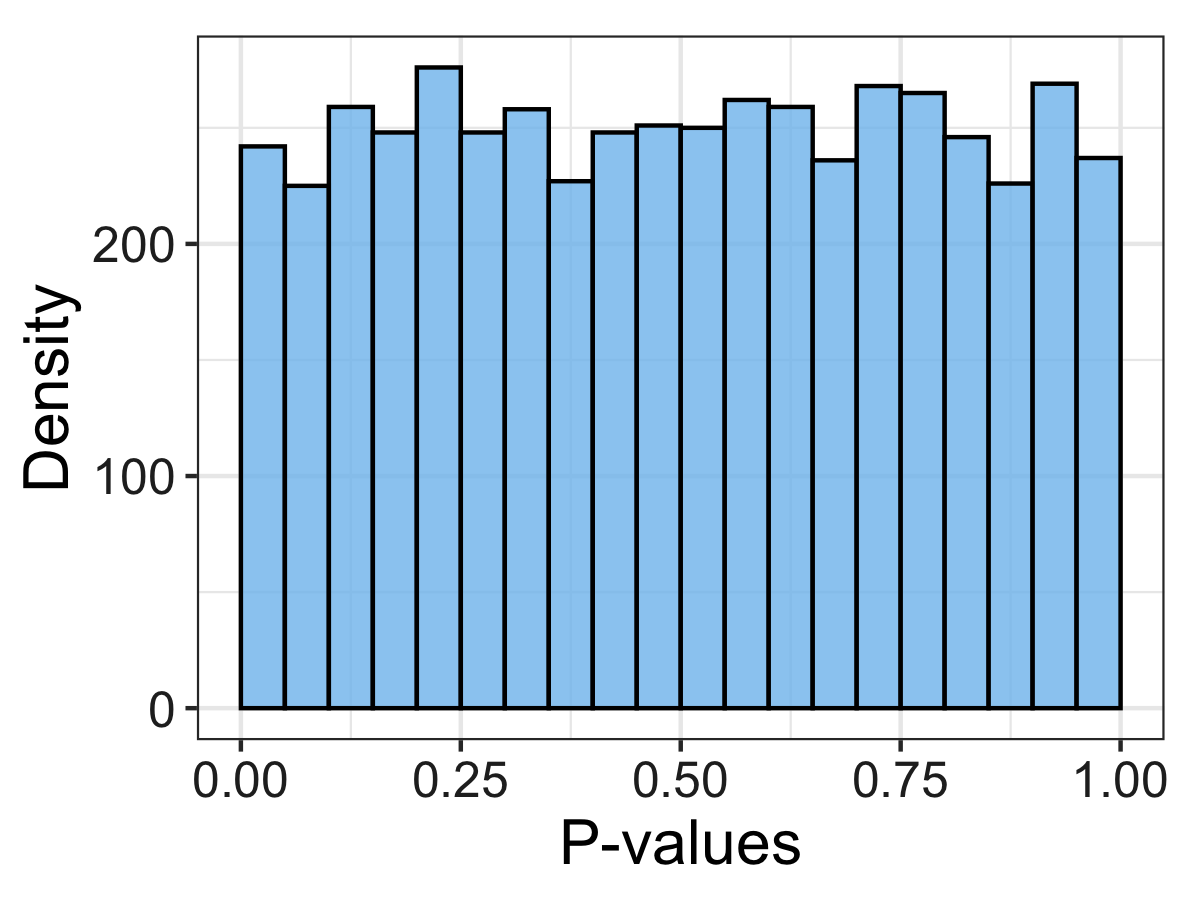}
        \caption{}
        \label{fig:ex_t_hist}
    \end{subfigure}%
    \centering
    \begin{subfigure}[t]{0.3\textwidth}
        \centering
        \includegraphics[width=1\textwidth]{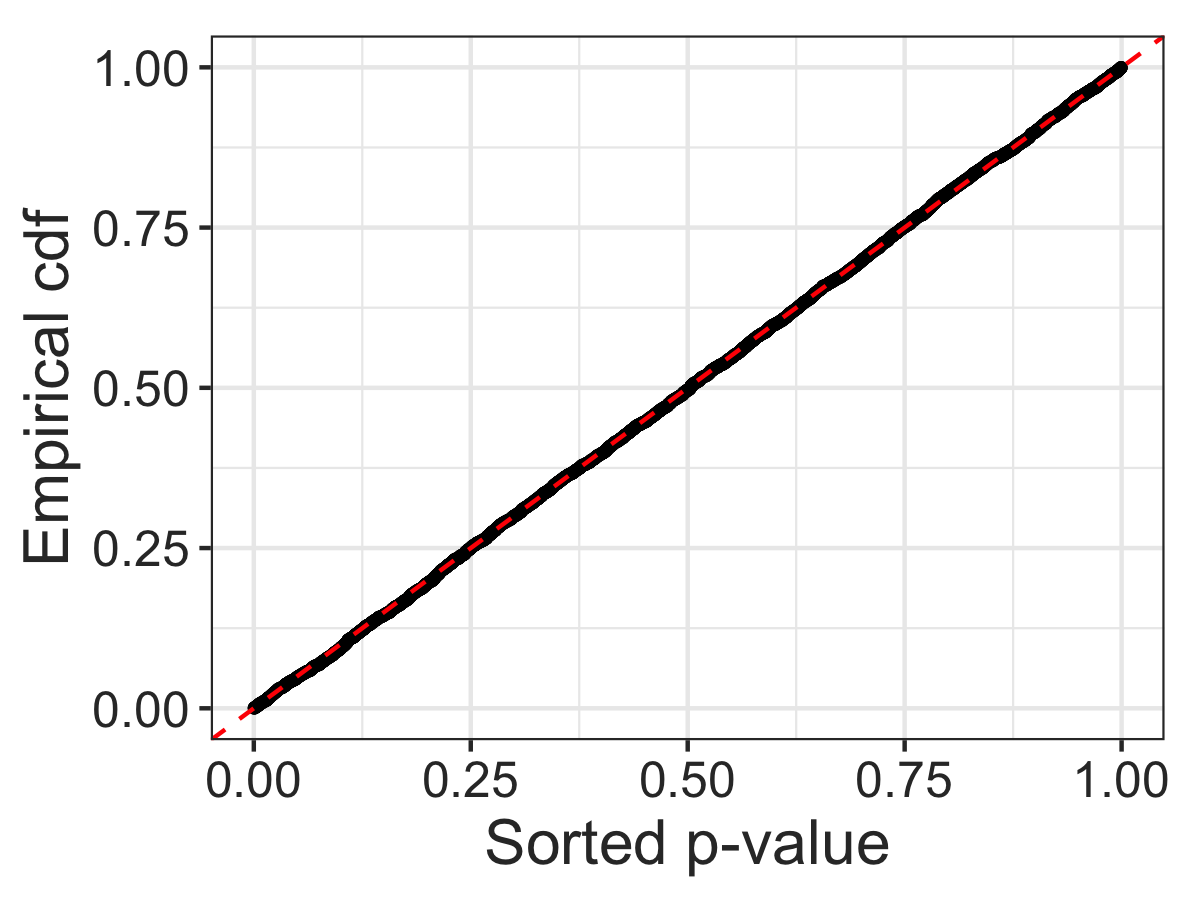}
        \caption{}
        \label{fig:ex_t_ecdf}
    \end{subfigure}%
    \hspace*{0em}
    \caption{Distribution of null p-values from a two-sided
        $t$-test. Histograms of p-values are calculated by
      $p_j=2\bar{\Phi}(\sqrt{n}|\hat{\beta}_j|/\hat{\sigma}_j)$. (a)
      $\hat{\sigma}_j$ is taken to be the standard error from R. (b)
      $\hat{\sigma}_j=\hat{\sigma}/\hat{\tau}_j$, where $\hat{\sigma}$ is
      estimated by \emph{ProbeFrontier} and $\hat{\tau}_j$ is from
      \eqref{eq:tau_hat} (c) Empirical cdf of the p-values in (b).}
    \label{fig:pt}
\end{figure*}

\begin{figure*}
    \begin{subfigure}[t]{0.3\textwidth}
        \centering
        \includegraphics[width=1\textwidth]{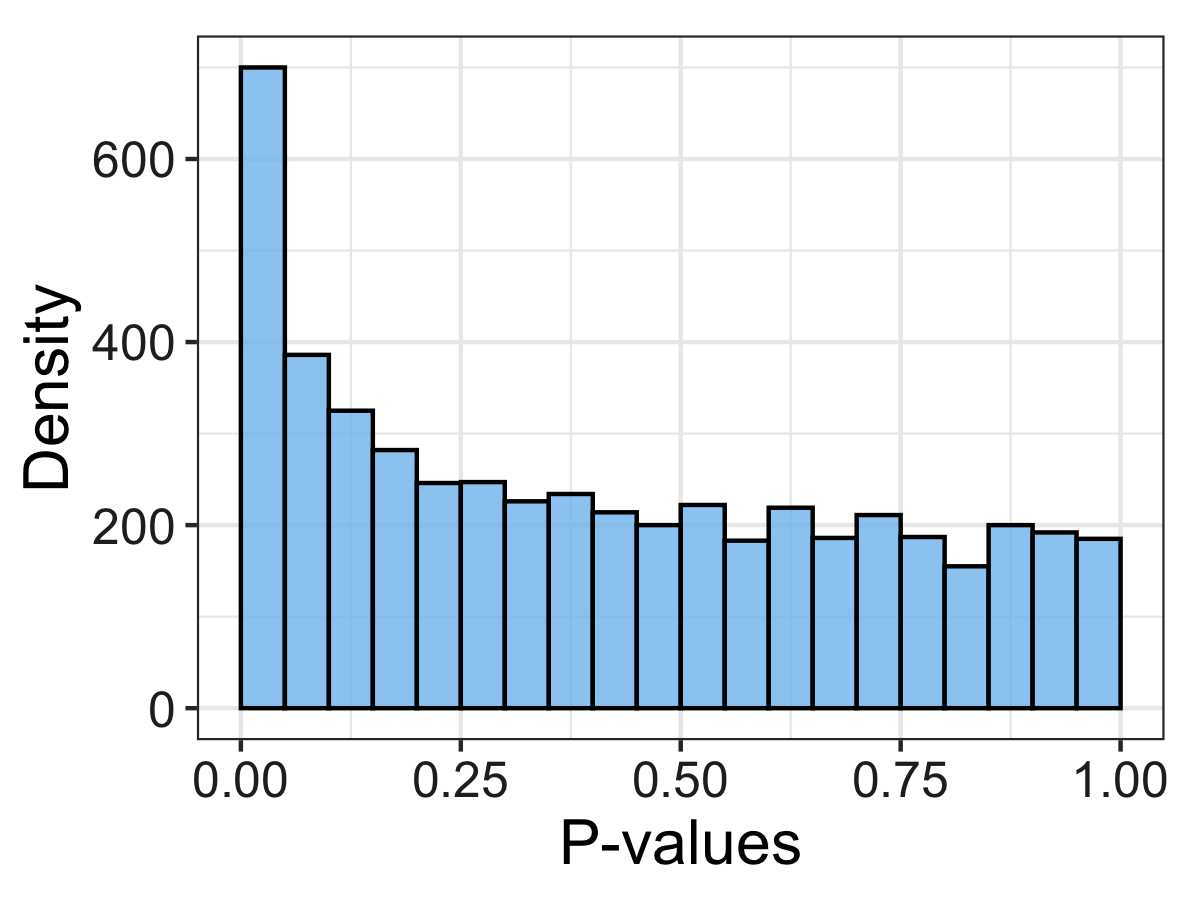}
        \caption{}
        \label{fig:ex_llr_unadj}
    \end{subfigure}
    \hspace*{0em}
    \begin{subfigure}[t]{0.3\textwidth}
        \centering
        \includegraphics[width=1\textwidth]{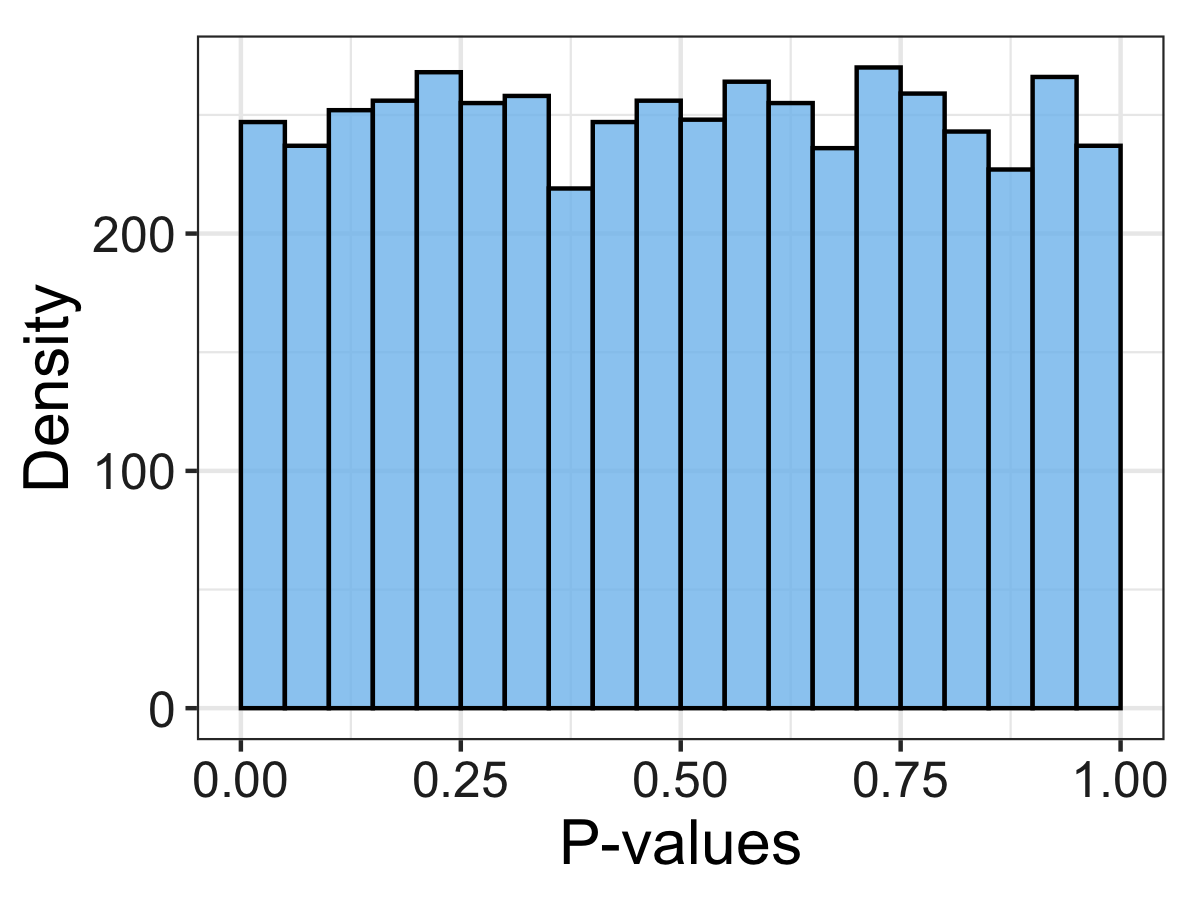}
        \caption{}
        \label{fig:ex_llr_hist}
    \end{subfigure}%
    \centering
    \begin{subfigure}[t]{0.3\textwidth}
        \centering
        \includegraphics[width=1\textwidth]{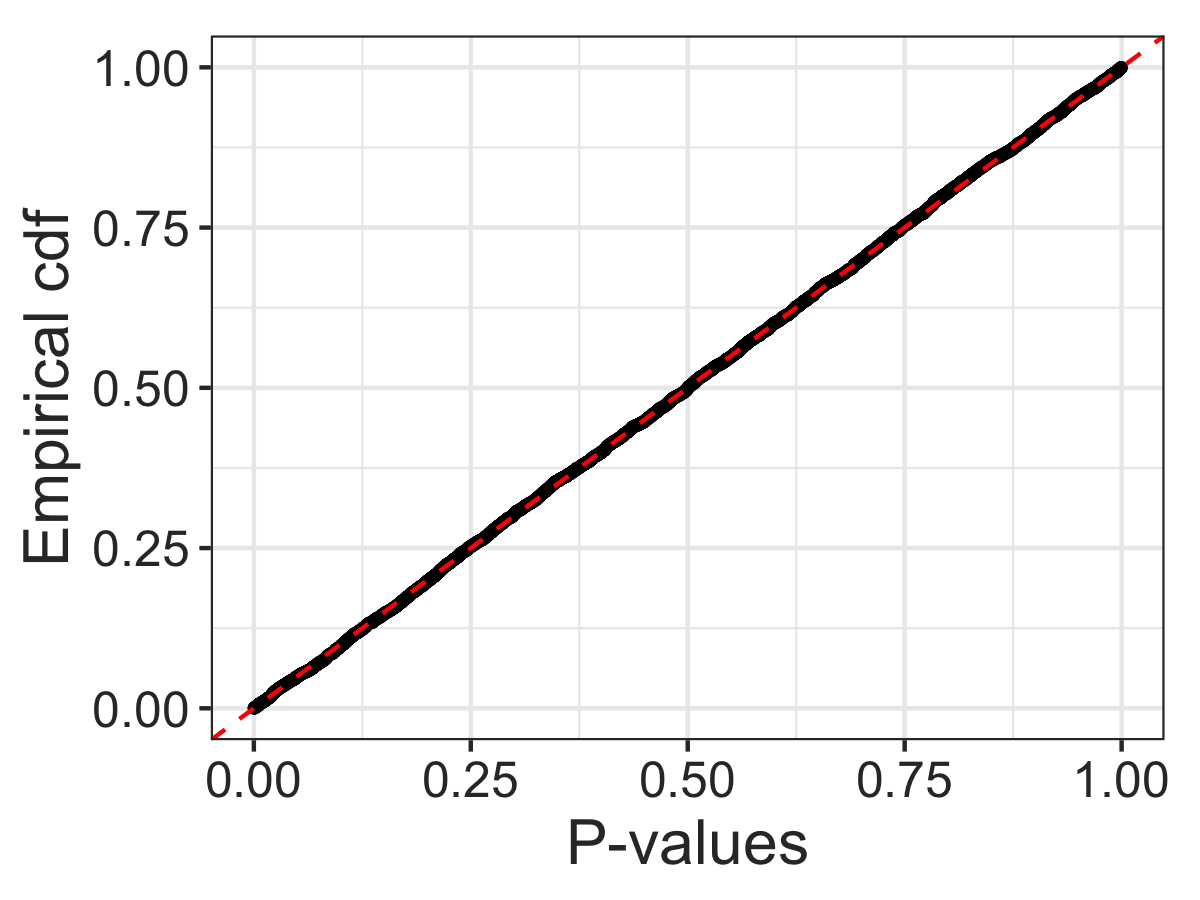}
        \caption{}
        \label{fig:ex_llr_ecdf}
    \end{subfigure}%
    \hspace*{0em}
    \caption{Distribution of null p-values calculated from the LRT (a) Histogram of p-values based on the chi-squared distribution (with 1 degree of freedom). (b) Histogram of p-values based on the re-scaled chi-squared distribution; the re-scaling factor is estimated by \emph{ProbeFrontier}. (c) Empirical cdf of p-values from (b).}
    \label{fig:p_lrt}
\end{figure*}

\begin{table}
\caption{Empirical performance of testing a null. Each
      cell reports the p-value probability and its standard error (in
      parentheses) estimated over $B=5,000$ repetitions. The p-values
      are calculated from a two sided $t$-test (as in Figure
      \ref{fig:ex_t_ecdf}) and the LRT (as in Figure \ref{fig:ex_llr_ecdf}). }
\label{tab:ex_pval}
\begin{tabular}{@{}p{2.5 cm}p{2cm} p{2cm}@{}}
\hline
    & $t$-test & LRT \\
    \hline
    $\mathbb{P}(\text{P-value}\leq 10\%)$ & 9.34\% (0.41\%)&9.68\% (0.41\%)\\
    $\mathbb{P}(\text{P-value}\leq 5\%)$ & 4.84\% (0.30\%) &4.94\% (0.30\%)\\
    $\mathbb{P}(\text{P-value}\leq 1\%)$ & 0.96\% (0.14\%)& 0.94\% (0.14\%)\\
    $\mathbb{P}(\text{P-value}\leq 0.1\%)$ &0.08\% (0.04\%) &0.08\% (0.04\%)\\
    \hline
\end{tabular}
\end{table}

\subsection{Coverage proportion}
We proceed to check the accuracy of the confidence intervals  described by \eqref{eq:ciemp}. We consider a single coordinate $\beta_j$ (we chose $\beta_j \neq 0$) and report the proportion of times  \eqref{eq:ciemp} covers $\beta_j$ across the $B$ repetitions (Table \ref{tab:cov_prop_nonnull_subgaussian}). At each level, the  empirical coverage proportion agrees with the desired target level, validating the marginal distribution \eqref{eq:nonnullsingle} in non-Gaussian settings. To investigate the efficacy of \eqref{eq:nonnullsingle} further, we calculate the standardized versions of the MLE given by 
\begin{equation}\label{eq:T_emp}
\hat{\zt}_j = \frac{\sqrt{n}(\hat{\beta}_j-\hat{\alpha} \beta_j)}{\hat{\sigma}/\hat{\tau}_j}
\end{equation}
for each run of the experiment; recall that the estimates $\hat{\alpha},\hat{\sigma}, \hat{\tau}_j$ arise from the \emph{ProbeFrontier} method and \eqref{eq:tau_hat}. 
 Figure \ref{fig:qqplot_subgaussian}  displays a qqplot of the empirical quantiles of $\hat{\zt}_j$ versus the standard normal quantiles, and once again, we observe a remarkable agreement. 

\begin{table}
\caption{Coverage proportion of a single variable. Each
      cell reports the proportion of times $\beta_j$ falls within
      \eqref{eq:ciemp}, calculated over $B=5,000$ repetitions; the
      standard errors are provided as well. The unknown signal strength $\gamma^2$ is estimated by \emph{ProbeFrontier}. }
\label{tab:cov_prop_nonnull_subgaussian}
\begin{tabular}{@{}cccccc@{}}
\hline
     Nominal coverage & & & & & \\
$100(1-\alpha)$ & 99 & 98 & 95 & 90 & 80 \\
        \hline
        Empirical coverage & 99.04 & 97.98 & 94.98 & 89.9 & 80.88 \\
        Standard error & 0.2 & 0.2 & 0.3 & 0.4 & 0.6 \\
      \hline
\end{tabular}
\end{table}

\begin{figure}
\begin{center}
          \includegraphics[width=6.5cm,keepaspectratio]{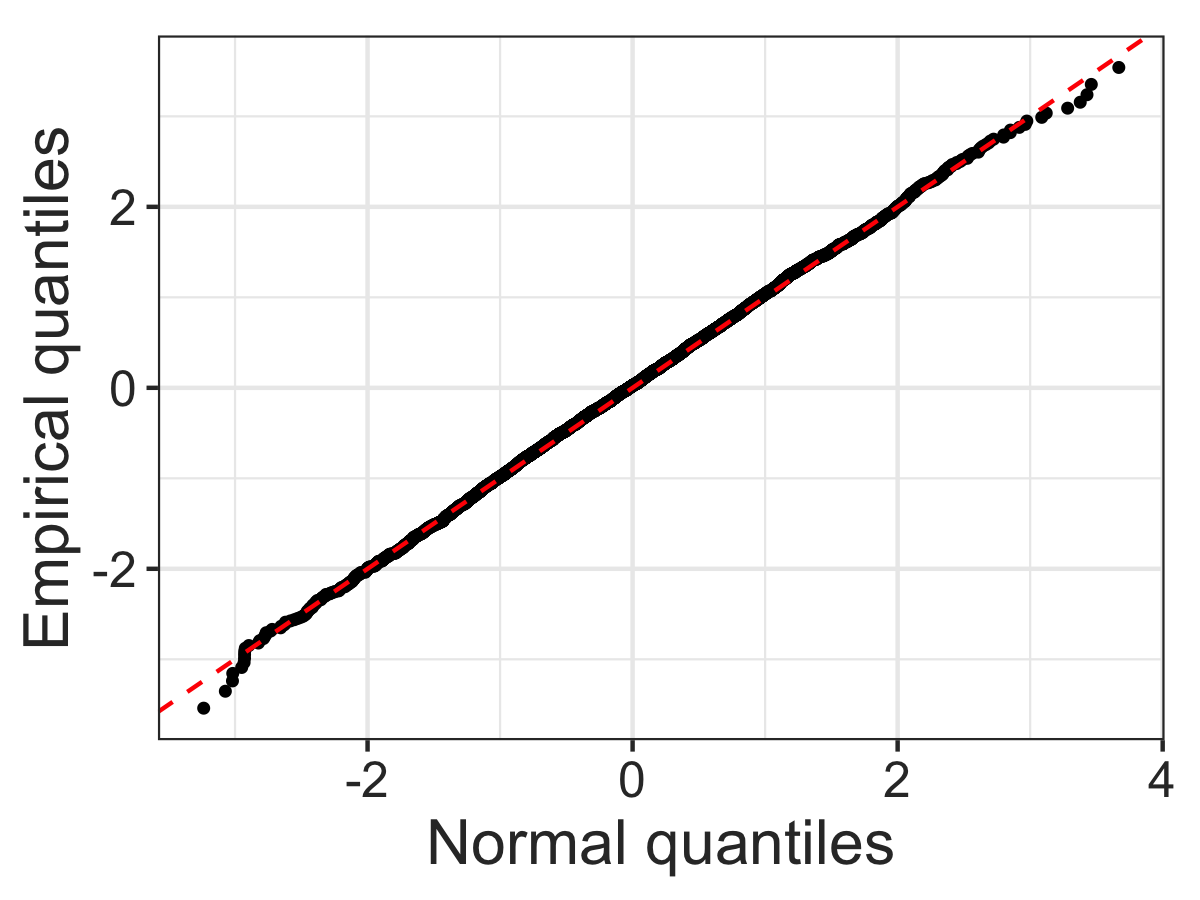}
\end{center}
\caption{ Quantiles of the empirical distribution of the MLE coordinate from Table \ref{tab:cov_prop_nonnull_subgaussian}, standardized as in \eqref{eq:T_emp}, versus standard normal quantiles. }
\label{fig:qqplot_subgaussian}
\end{figure}

Finally, we turn to study the performance of Theorem \ref{thm:bulk}. In each experiment, we compute the proportion of variables covered by the corresponding intervals in \eqref{eq:ciemp} and report the mean (Table \ref{tab:ex_ci}) across the $5000$ replicates. Once again, the average coverages  remained close to the desired thresholds for all the levels considered, demonstrating the applicability of the bulk result \eqref{eq:bulk_dep} beyond the setting of Gaussian covariates. 

\begin{table}
\caption{ Proportion of variables inside the confidence
        intervals \eqref{eq:ciemp}. Each cell reports the average
      coverage estimated over $B=5,000$ repetitions. The standard errors are shown as well.}
\label{tab:ex_ci}
\begin{tabular}{@{}cccccc@{}}
\hline
     Nominal coverage & & & &  \\
$100(1-\alpha)$ & 98 & 95 & 90 & 80  \\
        \hline
        Empirical coverage & 98.03 & 95.14& 90.1 & 80.2  \\
        Standard error & 0.01 & 0.01 & 0.02 & 0.02  \\
      \hline
\end{tabular}
\end{table}

\section{Models with an intercept}\label{sec:intercept_method}
In this section, we study the asymptotic distribution of a logistic MLE when the model contains an intercept, i.e. the likelihood of $y_i$ conditional on the  covariates $\mbf{x}_i$ is given by
\begin{equation}\label{eq:logistic_model_intercept}
    \prob(y_i = 1\,|\, \mbf{x}_i) = 1/ (1+\exp(-y_i (\beta_0 + \mbf{x}_i^\top \mbf{\beta}))).
\end{equation}
Suppose the intercept $\beta_0=o(1)$, then all the earlier theorems
about the distribution of $\hat{\beta}_j$ and $\llr_j$ apply as long
as $j \ge 1$; the parameters in Theorem \ref{thm:finitemle} and
Theorem \ref{thm:llr} are still solutions to
\eqref{eq:system_equation}. On the other hand, we conjecture that when
$\beta_0$ is not asymptotically negligible, the MLE remains
asymptotically Gaussian as in Theorem \ref{thm:finitemle}, but the
parameters $\alpha_\star$ $\sigma_\star$ and $\lambda_\star$ now
depend on $\beta_0$ as well as $\kappa$ and $\gamma$. (The phase
transition curve in \cite[Theorem~2.1]{candes18} also depends on both
$\gamma$ and $\beta_0$.) We conduct simulation studies to verify our
conjecture (Section \ref{sec:theory_intercept_simulation}) and discuss
how to estimate these parameters when the intercept is unknown
(Section \ref{sec:estimate_param_intercept}).
\subsection{Asymptotic distribution of the MLE}
Before we describe our conjecture when $\beta_0\neq o(1)$, we note
that Proposition \ref{prop:indtodep} and Lemma \ref{lemma:exact}
applied to $\{\hat \beta_j\}_{j \ge 1}$---all coordinates except the
intercept term---still hold because the rotation invariance argument
operates in the same way.  Thus, to establish the asymptotic MLE
distribution, we only need to study the limit of $\alpha(n)$ and
$\sigma(n)$ defined in Eqn. \eqref{eq:alphansigmandef}. We conjecture
that, as $n,p\to\infty$ while $p/n\to\kappa$, $\alpha(n)$ and
$\sigma(n)$ approach limits that can be determined by solving a system
of four equations.  
\begin{conjecture} \label{thm:theory_intercept}
Consider the logistic regression model \eqref{eq:logistic_model_intercept} and assume that $(\kappa, \gamma,\beta_0)$ is such that the MLE exists asymptotically. Denote $\hat{\beta}_0$ to be the MLE of the intercept and define $\alpha(n)$, $\sigma(n)$ as in Eqn. \eqref{eq:alphansigmandef}. Then, as $n,p\to\infty$ and $p/n\to \kappa > 0$,  
\begin{equation}\label{eq:asymp_param_intercept}
    \hat{\beta}_0 \stackrel{P}{\longrightarrow} b_\star,\quad  
    \alpha(n) \stackrel{P}{\longrightarrow} \alpha_\star,  \quad 
    \sigma(n) \stackrel{P}{\longrightarrow} \sigma_\star,
\end{equation}
and Theorem \ref{thm:finitemle} and Theorem \ref{thm:llr} hold with the set of parameters in \eqref{eq:asymp_param_intercept}. In \eqref{eq:asymp_param_intercept}, $\alpha_\star, \sigma_\star, b_\star$ are such that together with another constant  $\lambda_{\star}$, these solve a system of equations in four variables $(\alpha, \sigma,\lambda, b)$ given by
 \begin{equation}\label{eq:system_equation_intercept}
  \begin{dcases}
&\kappa^2 \sigma^2  =  \E{\rho'(-S_1) \left(\lambda \rho'(\prox_{\lambda \rho}(S_2))\right)^2 +\rho'(S_1)\left(\lambda \rho'(\prox_{\lambda \rho}(-S_2))\right)^2} \\
& 1-\kappa  = \E{\frac{\rho'(-S_1)}{1+\lambda \rho''(\prox_{\lambda \rho}(S_2))}+ \frac{\rho'(S_1)}{1+\lambda \rho''(\prox_{\lambda \rho}(-S_2))}}  \\
& 0 = \E{\rho'(\prox_{\lambda \rho}(S_2)) \rho'(-S_1)S_1 - \rho'(\prox_{\lambda \rho}(-S_2)) \rho'(S_1)S_1} \\
& 0 = \E{-\rho'(\prox_{\lambda \rho}(-S_2))\rho'(S_1) + \rho'(\prox_{\lambda \rho}(S_2))\rho'(-S_1)}.
\end{dcases}
\end{equation}
where $(Z_1, Z_2)\sim\N(0, \mbf{I}_2)$ and 
\begin{equation}\label{eq:defn_S}
S_1 = \gamma Z_1 + \beta_0 ,\quad S_2 = \alpha \gamma Z_1 + \sigma\sqrt{\kappa} Z_2 + b. 
\end{equation}
\end{conjecture}
$S_1$ and $S_2$ defined in \eqref{eq:defn_S} are related to $Q_1$ and
$Q_2$ in Eqn. \eqref{eq:system_equation} as $S_1 = -Q_1 + \beta_0$ and
$S_2 = Q_2 + b$. Compared to Eqn. \eqref{eq:system_equation},
Eqn. \eqref{eq:system_equation_intercept} has four equations, and the
fourth equation characterizes the limit of the estimated
intercept. When $\beta_0 = 0$, the set of equations \eqref{eq:system_equation_intercept} reduces to the system of equations \eqref{eq:system_equation}. 

In sum, Conjecture \ref{thm:theory_intercept} states that the marginal
distribution of a logistic MLE $\hat{\beta}_j$ is asymptotically
Gaussian with mean $\alpha_\star \beta_j$ and variance
$\sigma_\star^2/\tau_j^2$, where the parameters $\alpha_\star$ and
$\sigma_\star$ are determined by Eqn.
\eqref{eq:system_equation_intercept}.

\subsubsection{Finite sample accuracy}\label{sec:theory_intercept_simulation}

We study the accuracy of our conjecture through simulated examples,
where we fix $\beta_0 = 1$, but otherwise use the same setting as in
Section \ref{sec:bulk}. First, we report the coverage probability of a
single non-null variable on using the confidence interval from Eqn.
\eqref{eq:cisingle} (Table
\ref{tab:cov_prop_nonnull_intercept}). Although the confidence
intervals slightly undercovers the true coefficient, they are
reasonably accurate as the error is within 0.5\%. We also report the
coverage proportion of {\emph{all}} of the variables (Table
\ref{tab:bulk_cov_intercept}) which shows that the $(1-\alpha)$
confidence interval covers approximately $(1-\alpha)$ of all of the
variables in a single-shot experiment. We report results for different
covariance matrices in the supplementary material \cite[Section~D]{zhao2021_supp}, and observe that the
performance is consistent across different types of matrices. Finally,
we compute the adjusted p-values for a likelihood ratio statistics and
report the distributions of p-values (Table
\ref{tab:pval_lrt_intercept}). The adjusted p-values achieve the
desired type I error because the proportion of p-values below each 
level is as we expect.
\begin{table*}
\caption{Coverage proportion of a single non-null variable when the logistic model includes an intercept $\beta_0 = 1$ and the covariance matrix is defined as $\Sigma_{i,j} = 0.5^{|i-j|}$. Each cell reports the proportion of times $\beta_j$ falls within the adjusted $(1-\alpha)$ CI using theoretical (Column I) and estimated parameters (Column II). The proportions are calculated over $B=100,000$ repetitions in Column I and $B=10,000$ in Column II, and the standard errors are provided as well. }
\label{tab:cov_prop_nonnull_intercept}
\begin{tabular}{@{}ccccc@{}}
\hline
& \multicolumn{2}{c}{I. Theoretical } & \multicolumn{2}{c}{II. Estimated} \\
Nominal coverage & Empirical & Standard & Empirical & Standard  \\
$100(1-\alpha)$ & coverage & error & coverage & error \\
\hline
 99    & 98.96 & 0.03 & 98.74 & 0.11\\
 98    & 97.90 & 0.05 & 97.71 & 0.15 \\
 95    & 94.86 & 0.07 & 94.56 & 0.23\\
 90    & 89.83 & 0.10 & 89.18 & 0.31 \\
 80    & 79.88 & 0.13 & 79.34 & 0.40 \\
 \hline
\end{tabular}
\end{table*}

\begin{table*}
  \caption{Each cell reports the proportion of \emph{all} the
    variables in each run that fall within the corresponding intervals
    from \eqref{eq:cisingle} when $\beta_0 = 1$ in the same experiment
    as for Table \ref{tab:cov_prop_nonnull_intercept}. The standard
    errors are provided as well.}
\label{tab:bulk_cov_intercept}
\begin{tabular}{@{}ccccc@{}}
\hline
& \multicolumn{2}{c}{I. Theoretical} & \multicolumn{2}{c}{II. Estimated} \\
Nominal coverage & Empirical & Standard & Empirical & Standard  \\
$100(1-\alpha)$ & coverage & error & coverage & error \\
\hline
 99    & 98.897 & 0.002 & 98.79 & 0.11 \\
 98    & 97.858 & 0.003 & 97.73 & 0.11 \\
 95    & 94.811 & 0.005 & 94.64 & 0.12  \\
 90    & 89.790 & 0.008 & 89.55 & 0.12 \\
 80    & 79.808 & 0.010 & 79.49 & 0.12 \\
 \hline
\end{tabular}
\end{table*}

\begin{table}
\caption{Empirical performance of the LRT using adjusted p-values when the model has an intercept $\beta_0 = 1$.  Each cell reports the p-value probability for a random null coordinate estimated in $B=10,000$
      repetitions and the standard errors are provided in parentheses. Column I uses theoretical parameters, column II uses estimated parameters and Column III uses classical theory without adjustment.}
\label{tab:pval_lrt_intercept}
\begin{tabular}{@{}p{2.5cm}c c c@{}}
 \hline
   & I. Theoretical & II. Estimated & III. Classical \\
    \hline
    $\mathbb{P}(\text{P-value}\leq 10\%)$
    & 9.98 (0.30) & 10.04 (0.30) & 18.93 (0.45) \\
    $\mathbb{P}(\text{P-value}\leq 5\%)$
    & 4.92 (0.21) & 5.02 (0.22) & 11.62 (0.41) \\
    $\mathbb{P}(\text{P-value}\leq 1\%)$
    & 0.90 (0.09) & 0.99 (0.10) & 3.78 (0.29)  \\
    $\mathbb{P}(\text{P-value}\leq 0.5\%)$
    & 0.54 (0.07) & 0.61 (0.08) & 2.46 (0.25) \\
    \hline
\end{tabular}
\end{table}

\subsubsection{Effect of the intercept}
We study the effect of $\beta_0$ on the parameters $\alpha_\star$ and
$\sigma_\star$ by showing the theoretical predictions at
$\gamma^2 = 5$ for different choices of $\beta_0$ (Table
\ref{tab:theoretical_params}). We observe that all of the parameters
increase as $\beta_0$ increases (Column I). We should thus not ignore
the intercept when it is not trivially small. Because the intercept
term is not Gaussian, a model with an explicit intercept term is not
equivalent to a model without an explicit intercept term and a
matching overall signal strength.  As a demonstration, we compare the
parameters obtained here with solutions to the system of three
equations \eqref{eq:system_equation} when we merge the intercept with
the other coefficients, i.e. setting $\gamma = \sqrt{5 +
  \beta_0^2}$. This approximation is accurate unless $\beta_0$ is
large, for example, $\beta_0 = 2$ as in Table
\ref{tab:theoretical_params}, row 4.  

\begin{table}
      \caption{The theoretical parameters for different $\beta_0$ when $\gamma^2 = 5$. Column I shows the solutions to the system of four equations \eqref{eq:system_equation_intercept} while Column II shows the solution to the system of three equations \eqref{eq:system_equation} assuming $\beta_0 = 0$ and $\gamma = \sqrt{5 + \beta_0^2}$.}
    \label{tab:theoretical_params}
    \begin{tabular}{@{}c c c c c  c c c c@{}} 
    \hline
    & \multicolumn{4}{c}{I. Theoretical parameters } & \multicolumn{4}{c}{II. Merging intercept} \\
       $\beta_0$  &$\alpha_\star$ & $\sigma_\star$ & $\lambda_\star$ & $b_\star$ & $\sqrt{5 + \beta_0^2}$  & $\alpha_\star$ & $\sigma_\star$ & $\lambda_\star$\\
       \hline 
       0  & 1.50 & 4.75 & 3.03 & 0 & 2.24 & 1.50 & 4.75 & 3.03 \\
       0.5 & 1.51 & 4.84 & 3.13 & 0.76 & 2.29 & 1.51 & 4.84 & 3.12 \\
       1 & 1.56 & 5.16 & 3.45 & 1.559 & 2.45 & 1.55 & 5.13 & 3.42 \\
       2 & 1.83 & 7.01 & 5.47 & 3.68 & 3.00 & 1.75 & 6.45 & 4.83 \\
       2.5 & 2.31 & 10.00 & 8.96 & 5.80 & 3.35 & 1.95 & 7.73 & 6.26\\ 
       \hline
    \end{tabular}
\end{table}

\subsection{Estimating model parameters}\label{sec:estimate_param_intercept}

Conjecture \ref{thm:theory_intercept} suggests that the MLE
distribution for logistic models with intercepts is determined by
$\kappa$, $\gamma$ and $\beta_0$. In this section, we introduce a
procedure to estimate the unknown $\gamma$ and $\beta_0$ based on two
observable quantities. First, the phase transition curve
\cite{candes18} determines a problem dimension
$\kappa_s = h(\beta_0, \gamma)$ such that if $\kappa > \kappa_s$, then the MLE does not exist asymptotically almost surely.
We use the \emph{ProbeFrontier} (see Section
\ref{sec:estimate-parameter}) method to estimate $\kappa_s$. In turn,
the estimated $\hat{\kappa}_s$ provides the estimating equation
\begin{equation}\label{eq:EstimateIntercept_Eq1}
    h(\beta_0, \gamma) = \hat{\kappa}_s.
\end{equation}
Second, the marginal probability $p$ of observing a positive outcome is determined by $\beta_0$ and $\gamma$ since
\[
p = \prob(Y = 1)= \E{ 1/{1+\exp(-\beta_0 - \gamma Z)}}, \quad Z\sim\N(0,1).
\]
Here, we substitute a Gaussian variable for $X^\top \beta$, since
$X^\top \beta \sim \N(0,\gamma^2)$. We therefore use the observed
proportion of positive outcomes $\hat p$ to get a second estimating
equation
\begin{equation}\label{eq:EstimateIntercept_Eq2}
    \E{ 1/{1+\exp(-\beta_0 - \gamma Z)}} = \hat{p}.
\end{equation}
Solving \eqref{eq:EstimateIntercept_Eq1} and
\eqref{eq:EstimateIntercept_Eq2} gives
$(\hat{\beta}_0, \hat{\gamma})$, which is then plugged into
\eqref{eq:system_equation_intercept} to compute $\hat{\alpha}$ and
$\hat{\sigma}$.  Eqn. \eqref{eq:ciemp} then provides adjusted
$(1-\alpha)$ confidence interval for a single coefficient $\beta_j$.

Finally, we evaluate the empirical coverage of \eqref{eq:ciemp}.
As in Section \ref{sec:empirical}, we compute the coverage proportion
of a single non-null variable (Table
\ref{tab:cov_prop_nonnull_intercept}) and across all of the variables
(Table \ref{tab:bulk_cov_intercept}). We also study the performance of
the LRT using estimated parameters (Table
\ref{tab:pval_lrt_intercept}). Empirical coverage is accurate since it
is within three standard deviations from the nominal value. The coverage
across all of the variables is slightly smaller than nominal, but the
relative error is within 1\%. The adjusted p-values for the LRT also
control the type I error. We can also see that the results obtained by
using the estimated parameters compare favorably to those obtained
using the theoretical parameters.

\section{Is this all real?}
We have seen that in logistic models with Gaussian covariates of
moderately high dimensions, (a) the MLE overestimates the true effect
magnitudes, (b) the classical Fisher information formula
underestimates the true variability of the ML coefficients, and (c)
classical ML based null p-values are far from uniform. We introduced a
new maximum likelihood theory, which accurately amends all of these
issues and demonstrated empirical accuracy on non-Gaussian
light-tailed covariate distributions. We claim that the issues with ML
theory apply to a broader class of covariate distributions; in fact,
we expect to see similar {\em qualitative} phenomena in real datasets.

Consider the wine quality data \cite{simultech15}, which contains 4898
white wine samples from northern Portugal. The dataset consists of 11
numerical variables from physico-chemical tests measuring various
characteristics of the wine, such as density, pH and volatile acidity,
while the response records a wine quality score that takes on values
in $\{0,\hdots,10\}$. We define a binary response by thresholding the
scores, so that a wine receives a label $y=0$, if the corresponding
score is below $6$, and a label $y=1$, otherwise. We log-transform two
of the explanatory variables as to make their distribution more
symmetrical and concentrated.  We also center the variables so that
each has mean zero.

We explore the behavior of the classical logistic MLE for the variable
``volatile acidity'' ($\mathrm{va}$) at a grid of values of the
problem dimension $\kappa \in K$.  For each $\kappa \in K$, we
construct $B=100$ subsamples containing $n=p/\kappa$ observations
and calculate the MLE $\hat{\beta}_{\mathrm{va}}$ from
  each subsample. Figure \ref{fig:real} shows the boxplots of these
  estimated coefficients. Although the ground truth is unknown, the red dashed line plots the
  MLE $\hat{\beta}_{\mathrm{va}} = -6.18$ calculated over all 4898
  observations so that it is an accurate estimate of the corresponding
  parameter.  Noticeably, the ML coefficients move further from the
  red line, as the dimensionality factor increases, exhibiting a
  strong bias. For instance, when $\kappa$ is in $ \{0.10,0.18,0.26\} $, the
  median MLE is repectively equal to $\{-7.48, -9.43,-10.58\}$; that is, $\{1.21,1.53,1.71 \}$  times the
  value of the MLE (in magnitude) from the full data. 
These observations support our hypothesis that, irrespective of the
covariate distribution, the MLE increasingly overestimates effect
magnitudes in high dimensions.

\begin{figure}
    \centering
    \includegraphics[scale=0.16,keepaspectratio]{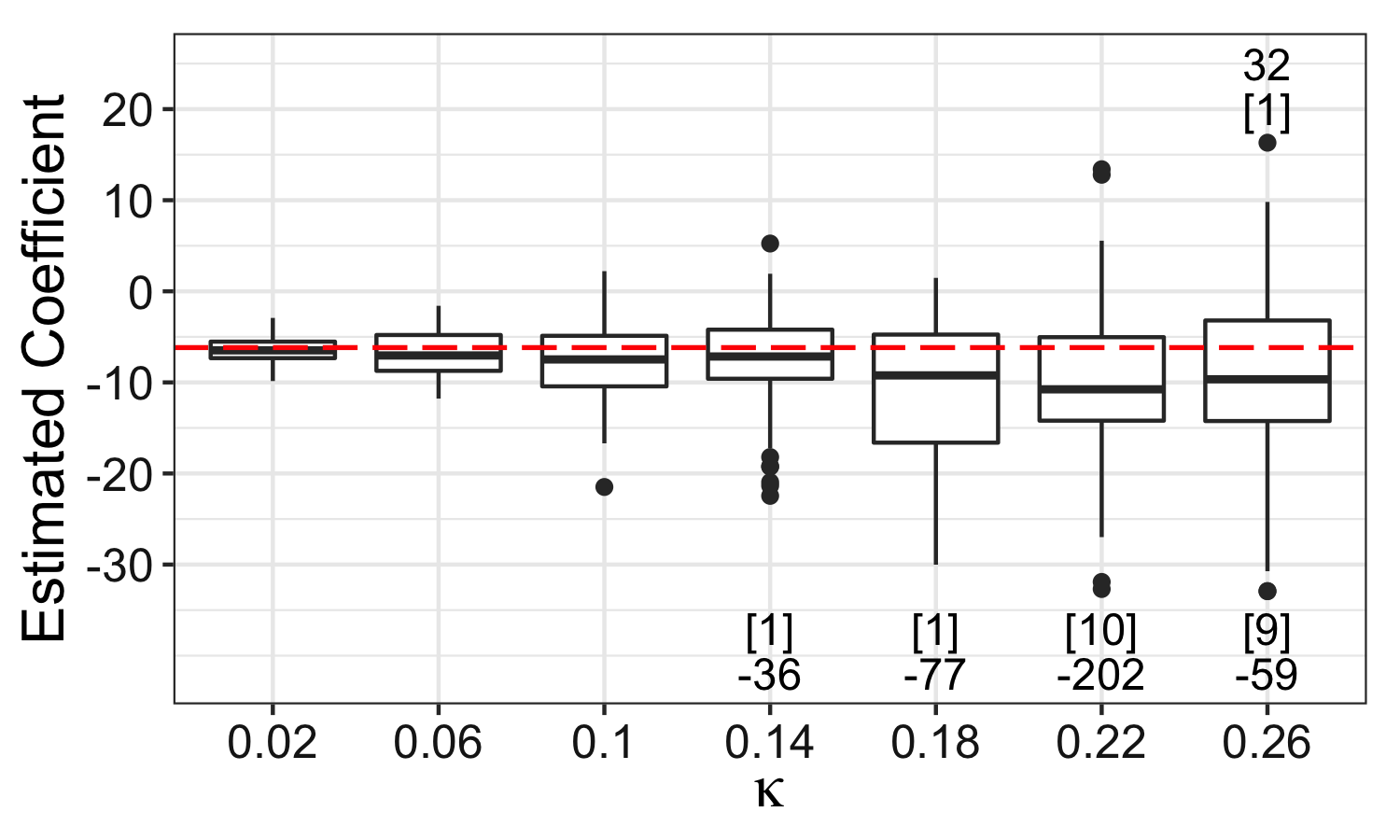}
    \caption{Estimated coefficient
      $\hat{\beta}_\mathrm{va}$ of the variable
      ``volatile acidity'', obtained from $B=100$ sub-samples of size
      $p/\kappa$. The red dashed line shows the MLE using all the
      observations. The numbers of outliers outside of range are those
      between squared brackets. The minimum/maximum value of these
      outliers is given by the accompanying integer.}
      \label{fig:real}
\end{figure}

\begin{figure*}
\centering
    \begin{subfigure}[t]{0.45\textwidth}
        \centering
        \includegraphics[width=1\textwidth]{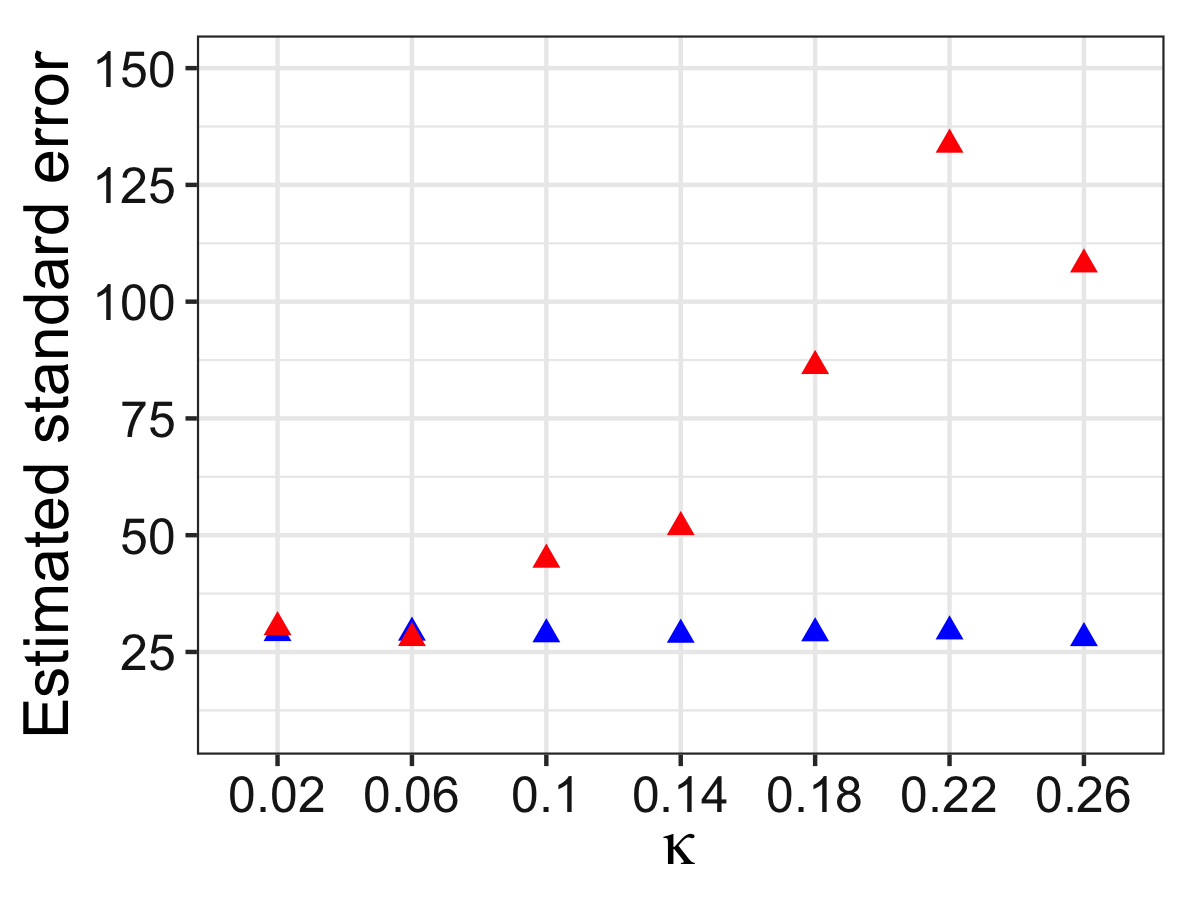}
        \caption{}
        \label{fig:sd_classfish}
    \end{subfigure}
    \hspace*{0em}
    \begin{subfigure}[t]{0.45\textwidth}
        \centering
        \includegraphics[width=1\textwidth]{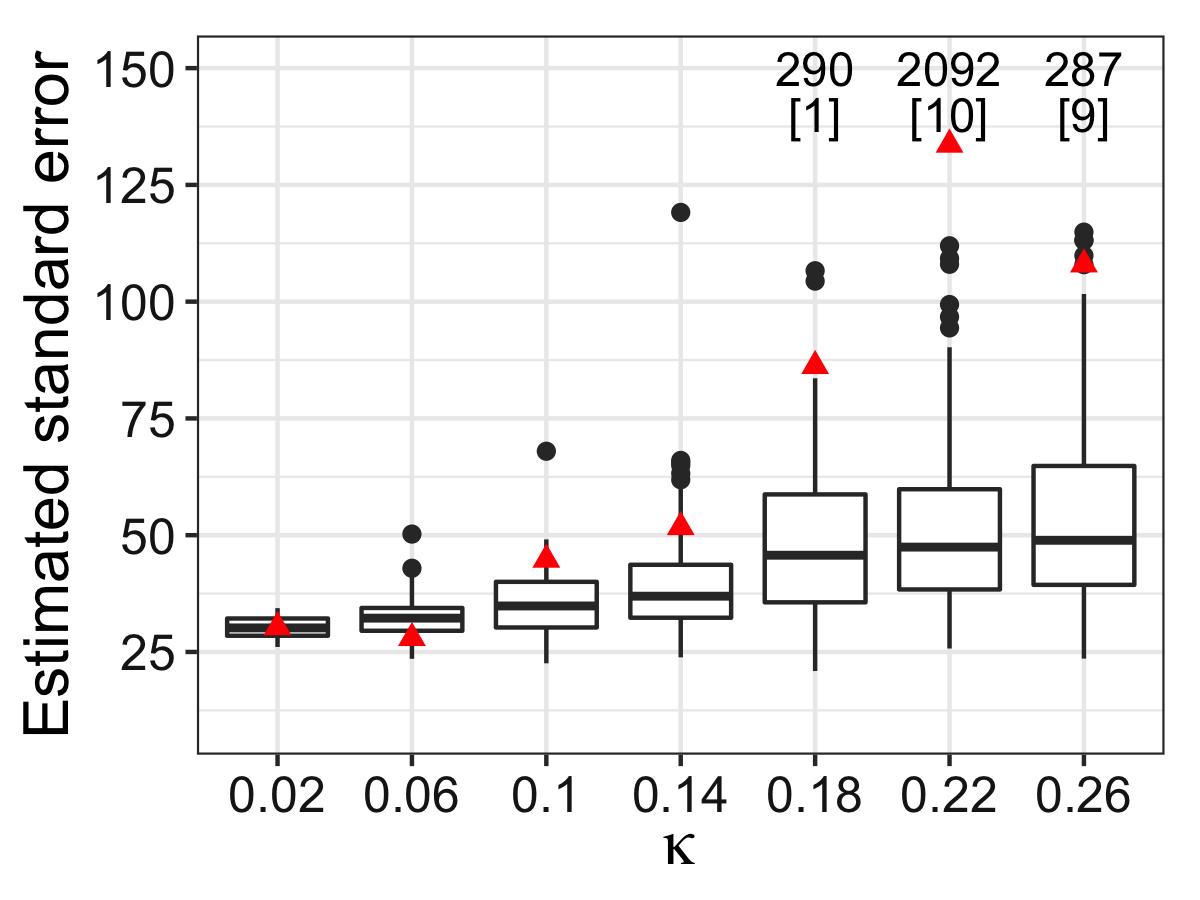}
        \caption{}
        \label{fig:sd_R}
    \end{subfigure}
    \hspace*{0em}
    \caption{Comparison of standard errors adjusted for the
        sample size. Throughout, the red triangles represent an
      estimate of the standard deviation (sd) of the MLE for
      ``volatile acidity'', obtained by creating folds of sample size
      $p/\kappa$ and computing the sd across these folds. To evidence
      the bias, the estimated standard deviations (blue) are multiplied by
      the root sample size (see text).  (a) Estimated sd given by the
      inverse Fisher information (adjusted for
      sample size) averaged over $B = 100$ subsamples
      for each value of $\kappa$. (b) Standard error (adjusted for
      sample size) from \texttt{R} calculated in each subsample. (The meaning
      of the numbers in between square brackets and their accompanying
      integer is as in Figure \ref{fig:real}.)}
    \label{fig:real-sds}
\end{figure*}

Next, Figure \ref{fig:real-sds} compares the standard deviation (sd)
in high dimensions with the corresponding prediction from the
classical Fisher information formula.\footnote{The Fisher information
  here is given by
  $\mathcal{I}(\mbf{\beta}) =\mathbb{E}[\mbf{X}^\top
  \mbf{D}(\mbf{\beta}) \mbf{X}]$, where $\mbf{D}(\mbf{\beta})$ is a
  diagonal matrix with the $i$-th diagonal entry given by
  $\rho''(\mbf{x}_i^\top\mbf{\beta}) =
  e^{\mbf{x}_i^\top\mbf{\beta}}/(1+ e^{\mbf{x}_i^\top
    \mbf{\beta}})^2$.}  Theorem \ref{thm:finitemle} states that when
$n$ and $p$ are both large and the covariates follow a multivariate
Gaussian distribution, $\hat{\beta}_j$ approximately obeys
 \begin{equation}
\label{eq:scaling}
\hat{\beta}_j = \alpha_\star(\kappa)\beta_j + \sigma_\star(\kappa)
Z/\sqrt{n},
\end{equation}
where $Z\sim\mathcal{N}(0,1)$. If we reduce $n$ by a factor of 2, the
standard deviation should increase by a factor of
$\sigma_{\star}(2\kappa)/\sigma_{\star}(\kappa) \times\sqrt{2}$. Thus,
in order to evidence the interesting contribution, namely, the factor
of $\sigma_{\star}(2\kappa)/\sigma_{\star}(\kappa) $, we plot
$\sqrt{n}\times \mathrm{sd}(\hat{\beta}_\mathrm{va})$, where $n$ is
the sample size used to calculate our estimate.

With the sample size adjustment, we see in Figure
\ref{fig:sd_classfish} that the variance of the ML coefficient is much
higher than the corresponding classical value, and that the mismatch
increases as the problem dimension increases. Thus, we see once more a
``variance inflation'' phenomenon similar to that observed for
Gaussian covariates (see also, \cite{elkaroui13, donoho16}). To be
complete, we here approximate/estimate the (inverse) Fisher
information as follows: for each $\kappa \in K$, we form
$\hat{\mathcal{I}}(\mbf{\beta})=\frac{1}{B} \sum_{j=1}^B \mbf{X}_j'
\mbf{D}_{\mbf{\beta}} \mbf{X}_j$, where $\mbf{X}_j$ is the covariate
matrix from the $j$-th subsample, and for $\mbf{\beta}$, we plug in
the MLE from the full data.

Standard errors obtained from software packages are different from
those shown in Figure \ref{fig:sd_classfish}, since these typically
use the maximum likelihood estimate $\hat{\mbf{\beta}}$ from the data
set at hand as a plug-in for $\mbf{\beta}$, and in addition, do not
take expectation over the randomness of the covariates. However, since
these estimates are widely used in practice, it is of interest to
contrast them with the true standard deviations.  Figure
\ref{fig:sd_R} presents a boxplot of standard errors of $\hat{\beta}_\mathrm{va}$
(adjusted for sample size) as obtained from \texttt{R}.  Observe that
for large values of $\kappa$, these also severely underestimate the
true variability.

\section{Discussion}

This paper establishes a maximum likelihood theory for
high-dimensional logistic models with arbitrarily correlated Gaussian
covariates. In particular, we establish a stochastic representation
for the MLE that holds for finite sample sizes. This in turn yields a
precise characterization of the finite-dimensional marginals of the
MLE, as well as the average behavior of its coordinates. Our theory
relies on the unknown signal strength parameter $\gamma$, which can be
accurately estimated by the \emph{ProbeFrontier} method. This
provides a valid procedure for constructing p-values and confidence
intervals for any finite collection of coordinates.  Furthermore, we
observe that our procedure produces reliable results for moderate
sample sizes, even in the absence of Gaussianity---in particular, when
the covariates are light-tailed.

We conclude with a few directions of future research---it would be of
interest to understand (a) the class of covariate distributions for
which our theory, or a simple modification thereof, continues to
apply, (b) the class of generalized linear models for which analogous
results hold, and finally, (c) the robustness of our proposed
procedure to model misspecifications.


\section*{Acknowledgements}
E.~C.~was supported by the National Science Foundation via DMS 1712800
and via the Stanford Data Science Collaboratory OAC 1934578, and by a 
generous gift from TwoSigma. P.S.~was supported by the Center for Research on Computation and Society, Harvard John A.~Paulson School of Engineering and Applied Sciences. Q.~Z.~would like to thank Stephen Bates
for helpful comments about an early version of this paper.
%
%

\begin{supplement}
\textbf{Auxiliary results and proofs}.
This supplementary material contains the proofs of Theorem \ref{thm:bulk}--\ref{thm:general_testfun} and additional simulation results for Section \ref{sec:intercept_method}. 
\end{supplement}

\begin{supplement}
\textbf{Code to reproduce results in the article.} This supplementary material contains code to reproduce simulation results in this article, also available at \url{https://github.com/zq00/logisticMLE}.
\end{supplement}

%
%
%
%
%
%
%




\end{document}